        \crefname{subsection}{Subsection}{Subsections}
        \crefname{subsection}{Subsection}{Subsections}
        \tikzset{every picture/.style=thick}
        \tikzset{vertex/.style = {ellipse,draw,inner sep=1pt, outer sep=1pt}}
        \tikzset{edge/.style = {-Stealth, >={latex[scale=1.5]}}}
        \tikzset{node/.style = {circle,draw,inner sep=1.5pt,outer sep=1.5pt}}
        \tikzset{line/.style = {->, >={latex[scale=1.5]}}}
        \tikzset{point/.style = {circle,draw,fill,inner sep=2pt}}
        \tikzset{strand/.style = {->, >={stealth[scale=1.5]}}}
    \theoremstyle{plain}
        \newtheorem{theorem}{Theorem}[section]
        \newtheorem{proposition}[theorem]{Proposition}
    \theoremstyle{definition}
        \newtheorem{definition}[theorem]{Definition}
        \newtheorem{example}[theorem]{Example}
        \newtheorem{assumption}[theorem]{Assumption}
        \newtheorem{algorithm}[theorem]{Algorithm}
    \theoremstyle{remark}
        \newtheorem{remark}[theorem]{Remark}
    \title{Topological Full Groups of Irreducible Edge Shifts have Solvable Conjugacy Problem}
    \author{Matteo Tarocchi}
    \date{}
    \newcommand{\init}[0]{\mathsf{i}}
    \newcommand{\term}[0]{\mathsf{t}}
    \newcommand{\col}[0]{\mathrm{c}}
    \newcommand{\F}[0]{\mathcal{F}}
    \newcommand{\G}[0]{\mathcal{G}}
    \newcommand{\Shift}[0]{\mathcal{S}}
    \newcommand{\Lang}[0]{\mathbb{L}}
    \newcommand{\Loops}[0]{\mathcal{L}}
    \newcommand{\Groupoid}[0]{\mathfrak{G}}
    \newcommand{\C}[0]{\mathbf{C}}
\begin{document}

\maketitle

\section{Introduction}

In this paper we solve the conjugacy problem for the topological full groups of irreducible edge shifts.
These groups were first studied in \cite{Matui}, where it is proved (among other facts) that they are of type $F_\infty$ (in particular, finitely presented) and that their commutator subgroups are simple and finitely generated.

While these groups were introduced in the setting of étale groupoids, they were later recontextualized as groups of almost automorphisms of trees \cite{Lederle}.
We will stick closer to this second approach, although our definitions are going to differ slightly from those in \cite{Lederle} in order to more easily define strand diagrams.
Moreover, our methods also work without assuming that the edge shift is irreducible, in which case the groups are not of almost automorphisms of forests, but only of homeomorphisms of the boundary.
Ultimately, we solve the conjugacy problem in all groups of \textit{piecewise-canonical homeomorphisms of edge shifts}, which is a larger family of groups than that introduced in \cite{Matui}.

It is the author's opinion that groups of piecewise-canonical homeomorphisms of edge shifts are interesting even when the edge shift is not irreducible, as they include for example the Houghton groups \cite{Houghton} and the Thompson-like group $QV$ \cite{QV,QV1}.
Note that the conjugacy problem in the Houghton groups and in $QV$ was previously solved in \cite{HoughtonConjugacy} and \cite{RearrConj}.

We employ the tool of \textit{strand diagrams}, which was introduced in \cite{BM14} for Thompson's groups $F$, $T$ and $V$ and also used in \cite{Aroca2018TheCP,,RearrConj,,GofferLederle}.
The version of this technology that we will use was developed in \cite{RearrConj}, but additional ideas (mostly those in \cref{sec:loops}) were gathered by the author while writing the upcoming paper \cite{ConjNek}, which is still work-in-progress.
Note that however \cite{RearrConj} alone does not solve the conjugacy problem in the titular groups.
This paper will recap what is needed from \cite{RearrConj}, simplifying it for this setting and generalizing it to allow for isolated points, and will apply a new strategy to fill the gaps left from \cite{RearrConj}, which is about solving for type 3 reductions.

In \cite{RearrConj}, the condition of \textit{reduction-confluence} was assumed and employed to solve for type 3 reductions.
This is a condition of confluence on a graph rewriting system.
For edge shifts, we will see that there is no need to assume such a condition:
type 3 reductions can instead always be dealt with by solving the word problem in some finitely presented commutative semigroups.

\subsection{Basic definitions, notations and conventions}
\label{sub:notation:convention}

We compose on the right, i.e., $fg$ means that $f$ is applied first and $g$ second.

The natural numbers $\mathbb{N}$ include the element $0$.

A \textbf{multiset} with underlying set $S$ is map $\lambda \colon S \to \mathbb{N}$.
Informally, it is of a collection of elements of $S$, each taken with multiplicity determined by $\lambda$.

A \textbf{graph} is a tuple $(V\Gamma,E\Gamma,\init,\term)$, where $V$ is a non-empty set (of \textit{vertices}), $E$ is a set (of \textit{edges}), $\init$ and $\term$ are maps $E \to V$ ($\init(e)$ and $\term(e)$ are the \textit{initial} and \textit{terminal} vertices of $e$, respectively).
If a graph is vertex- or edge-colored, $\col(x)$ will denote the color of the vertex or edge $x$.
The \textbf{in-} and \textbf{out-degrees} of a vertex $v$ are the cardinalities of the sets $\term(v)^{-1}$ and $\init^{-1}(e)$, respectively.

We will work with three distinct families of graphs:
a graph $\Gamma$ that defines an edge shift, a forest $\F(\Gamma|Y)$ of paths on $\Gamma$, and (closed) strand diagrams.
To minimize confusion, we use different names for their vertices and edges:
\begin{itemize}
    \item graphs $\Gamma$ have \textit{vertices} and \textit{edges} (the standard terminology);
    \item the forests of paths $\F(\Gamma|Y)$ have \textit{nodes} and \textit{lines};
    \item strand diagrams have \textit{points} and \textit{strands}.
\end{itemize}
For forests, a node is a \textit{root} if it has in-degree $0$, a \textit{leaf} if it has out-degree $0$, an \textit{internal node} otherwise.
Rooted forests have edges oriented away from roots.


\section{Piecewise-canonical homeomorphisms}

In this section we define the titular groups and the spaces on which they act.

\subsection{Edge shifts}

Recall from \cref{sub:notation:convention} that we call \textit{nodes} the vertices of a forest and \textit{lines} its edges, to avoid confusion with the graphs $\Gamma$.

\begin{definition}
\label{def:edge:shift}
Let $\Gamma = (V\Gamma,E\Gamma,\init,\term)$ be a finite graph and $Y$ be a multiset with underlying set $V\Gamma$.
The \textbf{(multi-initial) edge shift} $\Shift(\Gamma|Y)$ is the set of all infinite paths on $\Gamma$ starting from an element of $Y$ (multiple copies of the same vertex are counted with their multiplicity).
The \textbf{language} of the edge shift, denoted by $\Lang(\Gamma|Y)$, is the set of all finite paths on the graph $\Gamma$ starting from an element of $Y$, including distinct empty paths for all elements of $Y$.
\end{definition}

Each element of $\Shift(\Gamma|Y)$ is identified with an element of $Y$ followed by some infinite sequence of edges of $\Gamma$.
In the same fashion, each element of $\Lang(\Gamma|Y)$ is identified with an element of $Y$ followed by some finite sequence of edges.

Given $w \in \Lang(\Gamma|Y)$, its \textbf{cylinder} $C_w$ is the set of those elements of $\Shift(\Gamma|Y)$ that have $w$ as a prefix.
If $w = y e_1 \dots e_k$, the color of the cylinder $C_w$ is defined as the color of $e_k$ (or is $y$ when $k=0$).
We equip each edge shift $\Shift(\Gamma|Y)$ with the topology generated by the set of all cylinders, which makes it a Stone space.

What is commonly referred to as \textit{the edge shift on $\Gamma$} corresponds to $\Shift(\Gamma|V\Gamma)$.
For more about edge shifts, refer for example to \cite{LindMarcus}.

\subsection{Assumptions on the graph \texorpdfstring{$\Gamma$}{Gamma}}

From here onward, we will always work under the following assumptions.

\begin{assumption}[No dead ends]
\label{ass:no:out:degree:0}
No vertex of $\Gamma$ has out-degree $0$.
\end{assumption}

This assumption does not limit our choice of shift spaces nor groups.
Indeed, if $v$ has out-degree $0$ then any cylinder $C_{e_1 \dots e_k}$ with $\col(e_k)=v$ is empty, so $v$ can be removed from $\Gamma$ without modifying the edge shift.
Under this assumption, $\Lang(\Gamma|Y)$ is the set of all prefixes of the elements of $\Shift(\Gamma|Y)$.

\begin{assumption}[No redundant edges]
\label{ass:no:out:degree:1}
If $v$ is a vertex of $\Gamma$ of out-degree $1$, then $\init^{-1}(v)$ consists solely of a loop (i.e., an edge $e$ with $\init(e)=\term(e)$).
\end{assumption}

This assumption too does not limit our choice.
If $\Gamma$ has a vertex $v$ such that $\init^{-1}(v)=\{e\}$ and $\term(e) \neq v$, one can contract the edge $e$, identifying the vertices $v=\init(e)$ and $\term(e)$.
In terms of the edge shift, this modification simply recodes each word $pe$ into $p$, for all paths $p$ in $\Gamma$ terminating at $v$;
this does not change the space nor its topology, since the cylinder $C_{p}$ was the same as $C_{pe}$.
For what concerns the piecewise-canonical homeomorphisms (which we will define in \cref{sub:groups:of:piecewise:canonical:homeomorphisms}), say that $\Gamma$ is the original graph and $\Gamma^*$ is obtained by contracting $e$:
then $\Shift(\Gamma|Y)$ is isomorphic to $\Shift(\Gamma^*|Y^*)$, where $Y^*$ is obtained from $Y$ by replacing each copy of $v=\init(e)$ and of $\term(e)$ in $Y$ with a copy of the vertex resulting from the identification of the two.

Observe that \cref{ass:no:out:degree:1} implies that every \textit{inescapable cycle} (a cycle whose every vertex has out-degree $1$) must be a single loop.
Thus, any isolated point of $\Shift(\Gamma|Y)$ is simply $p x^\infty$ (a prefix $p$ followed by infinitely many instances of the digit $x$) for some $x \in V\Gamma$ such that $\init^{-1}(x)$ consists of a single loop.

In \cite{RearrConj}, vertices of out-degree $1$ were altogether disallowed by the \textit{expanding} condition.
We will fix this with the inclusion of type 0 reductions.

\subsection{The forest of paths}

The edge shift $\Shift(\Gamma|Y)$ can be seen as the boundary of a rooted forest:

\begin{definition}
\label{def:forest:of:paths}
Each choice of $\Gamma$ and $Y$ defines a (vertex- and edge-colored, rooted) \textbf{forest of paths} $\F(\Gamma|Y)$ as follows:
\begin{itemize}
    \item The set of colors is $V\Gamma$.
    \item The set of nodes is $\Lang(\Gamma|Y)$ and each $y e_1 \dots e_k \in \Lang(\Gamma|Y)$ is colored by $\col\left(\term(e_k)\right)$, or by $y$ when $k=0$.
    The set of roots is $Y$ (i.e., when $k=0$).
    \item There is a line from a root $y$ to a node $ye$ if $e \in E\Gamma$ originates from the vertex of $\Gamma$ corresponding to $y$.
    Such line is colored by $\col\left(\term(e)\right)$.
    \item There is a line from $y e_1 \dots e_{k-1}$ to any $y e_1 \dots e_k$, colored by $\col\left(\term(e_k)\right)$.
\end{itemize}
\end{definition}

Each internal node shares its color with the line terminating at it.
In this sense the edge-coloring is redundant, but it will be useful for strand diagrams.

\begin{example}
\label{ex:full:shift}
Let $V\Gamma = \{v\}$ and $Y$ consists of a single instance of $v$.
Then $\Shift(\Gamma|Y)$ is a \textbf{full shift}, which is the set of all infinite words over the alphabet $E\Gamma$.
The forest of paths, in this case, is an $|E\Gamma|$-ary rooted tree.
\end{example}

\begin{example}
\label{ex:edge:shift}
Consider the graph $\Gamma$ shown in \cref{fig:graph} and let $Y$ consist of a copy of $B$ and one of $G$.
Then $\Lang(\Gamma|Y)$ includes $B1334$ and $G4200$, but not $G32$ nor $B22$.
A finite portion of its forest of paths is portrayed in \cref{fig:forest}.
\end{example}

\begin{figure}
\centering
\begin{tikzpicture}
    \node[vertex,red] (R) at (-2,0) {$R$};
    \node[vertex,blue] (B) at (0,0) {$B$};
    \node[vertex,Green] (G) at (2,0) {$G$};
    \draw[edge] (R) to[out=210, in=150, min distance=1cm] node[left]{$0$} (R);
    \draw[edge] (B) to[out=30, in=150] node[above]{$1$} (G);
    \draw[edge] (B) to[out=180,in=0] node[above]{$2$} (R);
    \draw[edge] (G) to[out=30, in=330, min distance=1cm] node[right]{$3$} (G);
    \draw[edge] (G) to[out=210, in=330] node[below]{$4$} (B);
\end{tikzpicture}
\caption{A graph $\Gamma$ that will be used as an example throughout the paper.}
\label{fig:graph}
\end{figure}
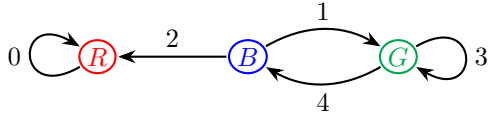

\begin{figure}
\centering
\begin{tikzpicture}[scale=.75]
    \node[node,blue] (B) at (-2,0) {};
    \node[node,Green] (G) at (2,0) {};
    \node[node,Green] (B1) at (-3,-1) {};
    \node[node,red] (B2) at (-1,-1) {};
    \node[node,Green] (G3) at (1,-1) {};
    \node[node,blue] (G4) at (3,-1) {};
    \draw[line,Green] (B) to (B1);
    \draw[line,red] (B) to (B2);
    \draw[line,Green] (G) to (G3);
    \draw[line,blue] (G) to (G4);
    \node[node,Green] (B13) at (-4,-2) {};
    \node[node,blue] (B14) at (-2.667,-2) {};
    \node[node,red] (B20) at (-1.333,-2) {};
    \node[node,Green] (G33) at (0,-2) {};
    \node[node,blue] (G34) at (1.333,-2) {};
    \node[node,Green] (G41) at (2.667,-2) {};
    \node[node,red] (G42) at (4,-2) {};
    \draw[line,Green] (B1) to (B13);
    \draw[line,blue] (B1) to (B14);
    \draw[line,red] (B2) to (B20);
    \draw[line,Green] (G3) to (G33);
    \draw[line,blue] (G3) to (G34);
    \draw[line,Green] (G4) to (G41);
    \draw[line,red] (G4) to (G42);
    \node[node,Green] (B133) at (-5,-3) {};
    \node[node,blue] (B134) at (-4.09090909,-3) {};
    \node[node,Green] (B141) at (-3.18181818,-3) {};
    \node[node,red] (B142) at (-2.27272727,-3) {};
    \node[node,red] (B200) at (-1.36363636,-3) {};
    \node[node,Green] (G333) at (-0.454545455,-3) {};
    \node[node,blue] (G334) at (0.454545455,-3) {};
    \node[node,Green] (G341) at (1.36363636,-3) {};
    \node[node,red] (G342) at (2.27272727,-3) {};
    \node[node,Green] (G413) at (3.18181818,-3) {};
    \node[node,blue] (G414) at (4.09090909,-3) {};
    \node[node,red] (G420) at (5,-3) {};
    \draw[line,Green] (B13) to (B133);
    \draw[line,blue] (B13) to (B134);
    \draw[line,Green] (B14) to (B141);
    \draw[line,red] (B14) to (B142);
    \draw[line,red] (B20) to (B200);
    \draw[line,Green] (G33) to (G333);
    \draw[line,blue] (G33) to (G334);
    \draw[line,Green] (G34) to (G341);
    \draw[line,blue] (G34) to (G342);
    \draw[line,Green] (G41) to (G413);
    \draw[line,blue] (G41) to (G414);
    \draw[line,red] (G42) to (G420);
\end{tikzpicture}
\caption{A portion of the forest of path for the edge shift defined in \cref{ex:edge:shift}.}
\label{fig:forest}
\end{figure}
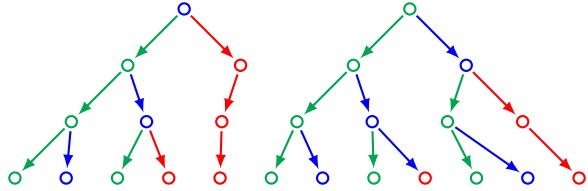

\subsection{The group of piecewise-canonical homeomorphisms}
\label{sub:groups:of:piecewise:canonical:homeomorphisms}

Let us define the elements of the groups that we wish to study.

\begin{definition}
\label{def:canonical:homeomorphism}
Let $p, q \in \Lang(\Gamma|Y)$ be nodes of $\F(\Gamma|Y)$ with the same color.
The \textbf{canonical homeomorphism} $\Phi_{p,q}$ is the homeomorphism defined as
$$\Phi_{p,q} \colon C_p \to C_q,\ p\alpha \mapsto q\alpha$$
for all infinite sequence $\alpha$ such that $p\alpha$ (and thus $q\alpha$) belongs to $\Shift(\Gamma|Y)$.
\end{definition}

\begin{definition}
\label{def:piecewise:canonical:homeomorphisms}
A homeomorphism $\phi$ of $\Shift(\Gamma|Y)$ is \textbf{piecewise-canonical} if there exist finite partitions $\{C_{w_1}, \dots, C_{w_k}\}$ and $\{C_{v_1}, \dots, C_{v_k}\}$ of $\Shift(\Gamma|Y)$ into cylinders such each restriction $\phi|_{C_{w_i}}$ is the canonical homeomorphism $\Phi_{w_i,v_i}$.
\end{definition}


It is routine to see that the set of all piecewise-canonical homeomorphisms of an edge shift $\Shift(\Gamma|Y)$ forms a group.
We will denote it by $\G(\Gamma|Y)$.

\begin{example}
\label{ex:Thompson}
Let $V\Gamma=\{v\}$, $|E\Gamma|=n$ and $Y$ consist of $k$ instances of the vertex $v$.
Then $\G(\Gamma|Y)$ is the Higman-Thompson group $V_{n,k}$.
\end{example}

The groups $\G(\Gamma|Y)$ are \textit{braided diagram groups} \cite{DiagramGroups} over digit rewriting systems, as explained (but not explored) in \cite{Order}.

\subsection{Forest pair diagrams}
\label{sub:forest:pair:diagrams}


Let $\F$ be a rooted forest.

\begin{definition}
A subforest of $\F$ is \textbf{complete} if, for each of its vertices, it either includes all or none of its children.
A subforest of $\F$ is \textbf{rooted} if it contains all the roots of $\F$.
\end{definition}

Note that each finite partition of $\Shift(\Gamma|Y)$ into cylinders corresponds to the set of leaves of a unique finite complete rooted subforest of $\F(\Gamma|Y)$.

Using this correspondence, a piecewise-canonical homeomorphism of $\Shift(\Gamma|Y)$ is determined by a \textbf{forest pair diagram}:
this is triple $(F_D,f,F_R)$, where $F_D$ and $F_R$ are finite complete rooted subforests of $\F(\Gamma|Y)$ and $f$ is a color-preserving bijection between the leaves of $F_D$ and $F_R$.
More explicitly, for each leaf $v$ of $F_D$ the homeomorphism $\phi$ represented by $(F_D,f,F_R)$ is defined as $\Phi_{v,f(v)}$ on the cylinder $C_v$.
An example is portrayed in \cref{sfig:forest:pair:diagram}.

\begin{definition}
Two forest pair diagrams are \textbf{equivalent} when they represent the same homeomorphism of $\Shift(\Gamma|Y)$.
\end{definition}

Equivalently, two forest pair diagrams are equivalent when one can be obtained from the other using the following types of expansions and reductions.

A \textbf{regular expansion} of $(F_D,f,F_R)$ consists of replacing a leaf $v$ of $F_D$ with the appropriate caret (in the unique way that produces a subforest of $\F(\Gamma|Y)$) and the leaf $f(v)$ of $F_R$ with the appropriate caret (same way as before) and replacing $f$ with a bijection that acts as $f$ on all leaves of $F_D$ other than $v$ and that maps each child $ve$ of $v$ to the child $f(v)e$ of $f(v)$.
A \textbf{regular reduction} is the inverse of a regular expansion, which thus consists of removing a caret attached at some $v$ of $F_D$ and its image in $F_R$ when $f(ve)=f(v)e$ for each leaf $ve$ of the caret.
These are the usual expansions (reductions) that occur in representatives for elements of the Higman-Thompson groups, except that the shape of the caret involved depends on the color of the parent.

A \textbf{degenerate expansion} of $(F_D,f,F_R)$ consists of removing from $F_D$ (respectively, from $F_R$) a leaf of the form $pe$ ($p$ a path and $e$ an edge) such that $C_p = C_{pe}$ (meaning that $C_p$ consists of an isolated point, or equivalently that $pe$ is the unique child of $p$) and modifying $f$ by replacing $pe$ by $p$ in its domain (respectively, range);
the resulting map is color-preserving thanks to \cref{ass:no:out:degree:1}.
A \textbf{degenerate reduction} is once again the inverse of a degenerate expansion, which thus consists of attaching a leaf $pe$ to some leaf $p$ of either $F_D$ or $F_R$ such that $C_p = C_{pe}$.
These reductions take care of the fact that, when $C_p=C_{pe}$, any canonical homeomorphism $\Phi_{p,q}$ is the same as $\Phi_{pe,q}$.

A forest pair diagram is \textbf{reduced} if no reduction can be performed on it.
Standard arguments (akin to those of \cref{prop:equivalent:semi:reduced}) show that each piecewise-canonical homeomorphism admits a unique reduced forest pair diagram.

Compositions can be computed in the standard Thompson-like way.
Given $\mathfrak{f}=(F_D,f,F_R)$ and $\mathfrak{g}=(G_D,g,G_R)$, there exists a finite complete rooted subforest $E$ of that contains both $G_D$ and $F_R$.
Then $(F_D,f,F_R)$ and $(G_D,g,D_R)$ can be expanded to $(F_D^*,f^*,E)$ and $(E,g^*,D_R^*)$, respectively, and the composition $\mathfrak{fg}$ is represented by $(F_D^*, f^* g^*, G_R^*)$.


\section{Strand diagrams for the elements of \texorpdfstring{$\G(\Gamma|Y)$}{G(Gamma|Y)}}

First developed in \cite{BM14}, strand diagrams were generalized in \cite{RearrConj} to tackle rearrangement groups, which include the groups $\G(\Gamma|Y)$.
This section introduces a simplified version of the technology of \cite{RearrConj} for the groups $\G(\Gamma|Y)$.

Recall from \cref{sub:notation:convention} that we will call \textit{points} the vertices of a strand diagram and \textit{strands} its edges, to avoid confusion with $\Gamma$ and $\F(\Gamma|Y)$.

\subsection{\texorpdfstring{$\Gamma$}{Gamma}-strand diagrams}

In this and the next subsections, we will introduce a groupoid of generalized piecewise-canonical homeomorphisms where we allow the domain and range to be edge shifts $\Shift(\Gamma|Y_1)$ and $\Shift(\Gamma|Y_2)$ with different initial multiset $Y_1$ and $Y_2$.

Let us fix a finite graph $\Gamma$.
For each vertex $v$ of $\Gamma$, we fix an arbitrary linear order of $\init^{-1}(v)$ and we write $\llbracket e_1, \ldots, e_k \rrbracket_v$ to say that $e_1 < e_2 < \dots < e_k$.

A \textbf{rotation system} of a graph is a choice of a cyclic order, for every vertex $v$, of the set $\init^{-1}(v) \cup \term^{-1}(v)$ (if there are loops, they count as two distinct elements, though such situation will not occur in our cases).
This corresponds to a planar embedding of a small neighborhood of a graph.
When a graph is equipped with a rotation system, we will denote the cyclic order at a vertex $v$ by $\llparenthesis a_1, \ldots, a_k \rrparenthesis_v$.

Below, by \textbf{split} we mean a point that is the terminus of at most one strand and the origin of at least two strands.
Conversely, by \textbf{merge} we mean a point that is the origin of at most one strand and the terminus of at least two strands.
A \textbf{source} (\textbf{sink}) is a point that is not the terminus (origin) of any strand.
An \textbf{internal split} (\textbf{internal merge}) is a split (merge) that is not a source (sink) and a \textbf{split-source} (\textbf{merge-sink}) is a split (merge) that is also a source (sink).
Finally, a point is \textbf{degenerate} if it has in- and out-degrees equal to $1$.

\begin{definition}
\label{def:gamma:strand:diagram}
A \textbf{$\Gamma$-strand diagram} is a finite acyclic graph that is vertex- and edge-colored by $V\Gamma$, is equipped with a linear order of its sources and of its sinks and with a rotation system, and such that the following conditions hold.
\begin{itemize}
    \item Every point is either a univalent source, a univalent sink, a split-source, a merge-sink, an internal split, an internal merge or is degenerate.
    \item If $v$ is a split, let $s_1, \ldots, s_k$ be the strands that originate from $v$ and, if $v$ is internal, let $s$ be the strand that terminates at $v$, so that $\llparenthesis s, s_1, \ldots, s_k \rrparenthesis_v$ or $\llparenthesis s_1, \ldots, s_k \rrparenthesis_v$.
    Let $e_1, \ldots, e_k$ be the edges of $\Gamma$ originating from $\col(v)$ so that $\llbracket e_1, \ldots, e_k \rrbracket_{\col(e)}$.
    Then $\col(s_i) = \term(e_i)$ for all $i=1,\ldots,k$ and, if $v$ is internal, $\col(v)=\col(s)$.
    \item If $v$ is a merge, let $s_1, \ldots, s_k$ be the strands that terminate at $v$ and, if $v$ is internal, let $s$ be the strand that originates from $v$, so that $\llparenthesis s, s_k, \ldots, s_1 \rrparenthesis_v$ or $\llparenthesis s_k, \ldots, s_1 \rrparenthesis_v$.
    Let $e_1, \ldots, e_k$ be the edges of $\Gamma$ originating from $\col(v)$ so that $\llbracket e_1, \ldots, e_k \rrbracket_{\col(e)}$.
    Then $\col(s_i) = \term(e_i)$ for all $i=1,\ldots,k$ and, if $v$ is internal, $\col(v)=\col(s)$.
    \item If $p$ is a degenerate point, let $s_1$ and $s_2$ be the strands that terminates at and that originates from $p$, respectively.
    Then $\col(v) = \col(s_1) = \col(s_2) = c$ and, as a vertex of $\Gamma$, $c$ is such that $\init^{-1}(c)$ consists of a single loop.
\end{itemize}
\end{definition}

The second, third and fourth conditions ensure that the neighborhoods of splits and merges look like those of nodes of the forest of paths (upside-down for merges).
For the fourth condition, compare with \cref{ass:no:out:degree:1}.

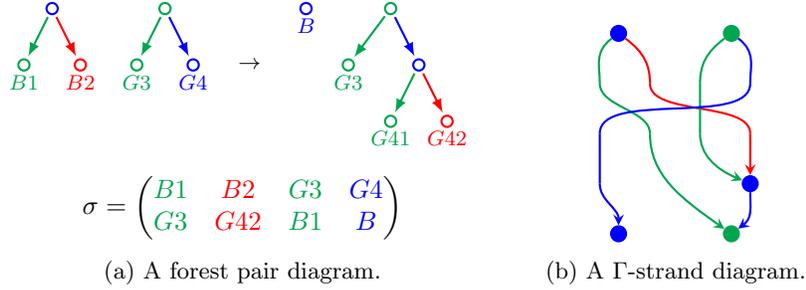
\begin{figure}
\centering
\begin{subfigure}{.55\textwidth}
\centering
\begin{tikzpicture}[scale=.75,font=\footnotesize]
    \begin{scope}[xshift=-2.5cm,xscale=.5]
    \node[node,blue] (B) at (-2,0) {};
    \node[node,Green] (G) at (2,0) {};
    \node[node,Green] (B1) at (-3,-1) {};
    \node[node,red] (B2) at (-1,-1) {};
    \node[node,Green] (G3) at (1,-1) {};
    \node[node,blue] (G4) at (3,-1) {};
    \draw[line,Green] (B) to (B1);
    \draw[line,red] (B) to (B2);
    \draw[line,Green] (G) to (G3);
    \draw[line,blue] (G) to (G4);
    \node[below,Green] at (B1) {$B1$};
    \node[below,red] at (B2) {$B2$};
    \node[below,Green] at (G3) {$G3$};
    \node[below,blue] at (G4) {$G4$};
    \end{scope}
    \node at (0,-1) {$\to$};
    \begin{scope}[xshift=1.5cm,xscale=.5]
    \node[node,blue] (B) at (-1,0) {};
    \node[node,Green] (G) at (2,0) {};
    \node[node,Green] (G3) at (.5,-1) {};
    \node[node,blue] (G4) at (3,-1) {};
    \draw[line,Green] (G) to (G3);
    \draw[line,blue] (G) to (G4);
    \node[node,Green] (G41) at (2,-2) {};
    \node[node,red] (G42) at (4,-2) {};
    \draw[line,Green] (G4) to (G41);
    \draw[line,red] (G4) to (G42);
    \node[below,blue] at (B) {$B$};
    \node[below,Green] at (G3) {$G3$};
    \node[below,Green] at (G41) {$G41$};
    \node[below,red] at (G42) {$G42$};
    \end{scope}
\end{tikzpicture}
\vspace{-.1cm}
$$\sigma =
\begin{pmatrix}
\textcolor{Green}{B1} & \textcolor{red}{B2} & \textcolor{Green}{G3} & \textcolor{blue}{G4} \\
\textcolor{Green}{G3} & \textcolor{red}{G42} & \textcolor{Green}{B1} & \textcolor{blue}{B}
\end{pmatrix}
$$
\vspace{-.4cm}
\caption{A forest pair diagram.}
\label{sfig:forest:pair:diagram}
\end{subfigure}
\begin{subfigure}{.4\textwidth}
\centering
\begin{tikzpicture}[yscale=2/3]
    \node[point,blue] (0B) at (-.75,0) {};
    \node[point,Green] (0G) at (.75,0) {};
    \coordinate (1G1) at (-1,-1) {};
    \coordinate (1R) at (-.333,-1) {};
    \coordinate (1G2) at (.333,-1) {};
    \coordinate (1B) at (1,-1) {};
    \coordinate (2B) at (-1,-2) {};
    \coordinate (2G1) at (-.333,-2) {};
    \coordinate (2G2) at (.333,-2) {};
    \coordinate (2R) at (1,-2) {};
    \coordinate (3B1) at (-1,-3) {};
    \node[point,blue] (3B2) at (1,-3) {};
    \node[point,blue] (4B) at (-.75,-4) {};
    \node[point,Green] (4G) at (.75,-4) {};
    \draw[Green] (0B) to[out=225,in=90] (1G1);
    \draw[red] (0B) to[out=315,in=90] (1R);
    \draw[Green] (0G) to[out=225,in=90] (1G2);
    \draw[blue] (0G) to[out=315,in=90] (1B);
    \draw[Green] (1G1) to[out=270,in=90] (2G1);
    \draw[red] (1R) to[out=270,in=90] (2R);
    \draw[Green] (1G2) to[out=270,in=90] (2G2);
    \draw[blue] (1B) to[out=270,in=90] (2B);
    \draw[blue] (2B) to[out=270,in=90] (3B1);
    \draw[strand,Green] (2G2) to[out=270,in=150] (3B2);
    \draw[strand,red] (2R) to (3B2);
    \draw[strand,blue] (3B1) to[out=270,in=90] (4B);
    \draw[strand,Green] (2G1) to[out=270,in=135] (4G);
    \draw[strand,blue] (3B2) to[out=270,in=45] (4G);
\end{tikzpicture}
\caption{A $\Gamma$-strand diagram.}
\label{sfig:strand:diagram}
\end{subfigure}
\caption{An element of $\G(\Gamma|Y)$, with $\Gamma$ and $Y$ from \cref{ex:edge:shift}.}
\label{fig:element}
\end{figure}

Given a $\Gamma$-strand diagram with sources $D_1,\dots,D_n$ and sinks $R_1,\dots,R_m$, its \textbf{domain} is the tuple $\left(\col(D_1),\dots,\col(D_n)\right)$ and its \textbf{range} is $\left(\col(R_1),\dots,\col(R_m)\right)$.
Domains and ranges correspond to choices of multisets $Y$ with underlying set $V\Gamma$ together with linear orders of such multisets.

\cref{sfig:strand:diagram} depicts a strand diagram with domain and range equal to $(\textcolor{blue}{B},\textcolor{Green}{G})$.

\subsection{The groupoid \texorpdfstring{$\Groupoid(\Gamma)$}{G(Gamma)}}

Let us see how to reduce and compose strand diagrams.

\subsubsection{Reductions of strand diagrams}
\label{ssub:reductions}

We say that two strand diagrams are \textbf{equivalent} if one can be obtained from the other using the two following reductions (also illustrated in \cref{fig:reductions}).
We will write $[F]$ to denote the equivalence class of a $\Gamma$-strand diagram $F$.

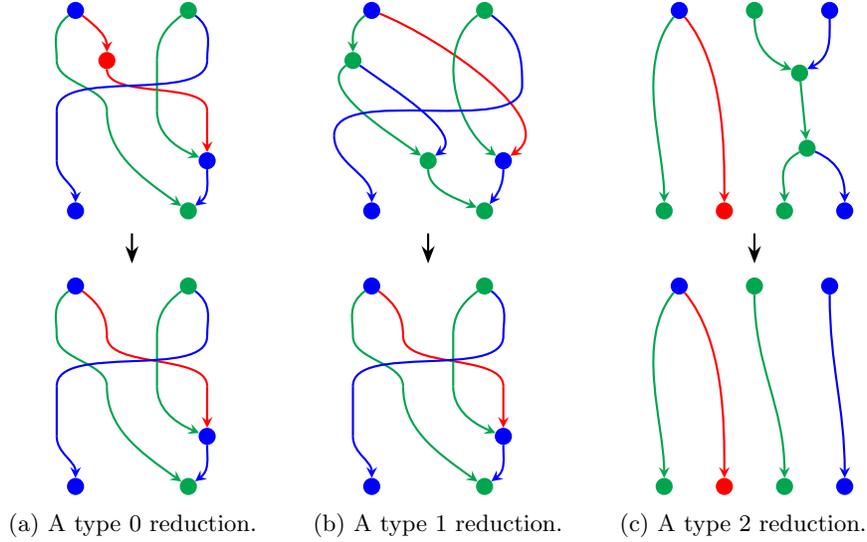
\begin{figure}
\centering
\begin{subfigure}{.3\textwidth}
\centering
\begin{tikzpicture}[yscale=2/3]
    \begin{scope}[yshift=4.75cm]
    \node[point,blue] (0B) at (-.75,0) {};
    \node[point,Green] (0G) at (.75,0) {};
    \coordinate (1G1) at (-1,-1) {};
    \node[point,red] (1R) at (-.333,-1) {};
    \coordinate (1G2) at (.333,-1) {};
    \coordinate (1B) at (1,-1) {};
    \coordinate (2B) at (-1,-2) {};
    \coordinate (2G1) at (-.333,-2) {};
    \coordinate (2G2) at (.333,-2) {};
    \coordinate (2R) at (1,-2) {};
    \coordinate (3B1) at (-1,-3) {};
    \node[point,blue] (3B2) at (1,-3) {};
    \node[point,blue] (4B) at (-.75,-4) {};
    \node[point,Green] (4G) at (.75,-4) {};
    \draw[Green] (0B) to[out=225,in=90] (1G1);
    \draw[strand,red] (0B) to[out=315,in=90] (1R);
    \draw[Green] (0G) to[out=225,in=90] (1G2);
    \draw[blue] (0G) to[out=315,in=90] (1B);
    \draw[Green] (1G1) to[out=270,in=90] (2G1);
    \draw[red] (1R) to[out=270,in=90] (2R);
    \draw[Green] (1G2) to[out=270,in=90] (2G2);
    \draw[blue] (1B) to[out=270,in=90] (2B);
    \draw[blue] (2B) to[out=270,in=90] (3B1);
    \draw[strand,Green] (2G2) to[out=270,in=150] (3B2);
    \draw[strand,red] (2R) to (3B2);
    \draw[strand,blue] (3B1) to[out=270,in=90] (4B);
    \draw[strand,Green] (2G1) to[out=270,in=135] (4G);
    \draw[strand,blue] (3B2) to[out=270,in=45] (4G);
    \end{scope}
    \draw[-Stealth] (0,.3) to (0,-.3);
    \begin{scope}[yshift=-.75cm]
    \node[point,blue] (0B) at (-.75,0) {};
    \node[point,Green] (0G) at (.75,0) {};
    \coordinate (1G1) at (-1,-1) {};
    \coordinate (1R) at (-.333,-1) {};
    \coordinate (1G2) at (.333,-1) {};
    \coordinate (1B) at (1,-1) {};
    \coordinate (2B) at (-1,-2) {};
    \coordinate (2G1) at (-.333,-2) {};
    \coordinate (2G2) at (.333,-2) {};
    \coordinate (2R) at (1,-2) {};
    \coordinate (3B1) at (-1,-3) {};
    \node[point,blue] (3B2) at (1,-3) {};
    \node[point,blue] (4B) at (-.75,-4) {};
    \node[point,Green] (4G) at (.75,-4) {};
    \draw[Green] (0B) to[out=225,in=90] (1G1);
    \draw[red] (0B) to[out=315,in=90] (1R);
    \draw[Green] (0G) to[out=225,in=90] (1G2);
    \draw[blue] (0G) to[out=315,in=90] (1B);
    \draw[Green] (1G1) to[out=270,in=90] (2G1);
    \draw[red] (1R) to[out=270,in=90] (2R);
    \draw[Green] (1G2) to[out=270,in=90] (2G2);
    \draw[blue] (1B) to[out=270,in=90] (2B);
    \draw[blue] (2B) to[out=270,in=90] (3B1);
    \draw[strand,Green] (2G2) to[out=270,in=150] (3B2);
    \draw[strand,red] (2R) to (3B2);
    \draw[strand,blue] (3B1) to[out=270,in=90] (4B);
    \draw[strand,Green] (2G1) to[out=270,in=135] (4G);
    \draw[strand,blue] (3B2) to[out=270,in=45] (4G);
    \end{scope}
\end{tikzpicture}
\caption{A type 0 reduction.}
\label{sfig:0:reduction}
\end{subfigure}
\begin{subfigure}{.35\textwidth}
\centering
\begin{tikzpicture}[yscale=2/3]
    \begin{scope}[yshift=4.75cm]
    \node[point,blue] (0B) at (-.75,0) {};
    \node[point,Green] (0G) at (.75,0) {};
    \node[point,Green] (1G) at (-1,-1) {};
    \coordinate (1B) at (1.25,-1.5) {};
    \coordinate (2B) at (-1.25,-2.5) {};
    \node[point,Green] (3G) at (0,-3) {};
    \node[point,blue] (3B) at (1,-3) {};
    \node[point,blue] (4B) at (-.75,-4) {};
    \node[point,Green] (4G) at (.75,-4) {};
    \draw[strand,Green] (0B) to[out=225,in=90] (1G);
    \draw[strand,red] (0B) to[out=330,in=45,looseness=.75] (3B);
    \draw[strand,Green] (0G) to[out=225,in=135,looseness=.75] (3B);
    \draw[blue] (0G) to[out=315,in=90] (1B);
    \draw[strand,Green] (1G) to[out=225,in=135,looseness=.75] (3G);
    \draw[strand,blue] (1G) to[out=315,in=45,looseness=.75] (3G);
    \draw[blue] (1B) to[out=270,in=90] (2B);
    \draw[strand,blue] (2B) to[out=270,in=90] (4B);
    \draw[strand,Green] (3G) to[out=270,in=150] (4G);
    \draw[strand,blue] (3B) to[out=270,in=60] (4G);
    \end{scope}
    \draw[-Stealth] (0,.3) to (0,-.3);
    \begin{scope}[yshift=-.75cm]
    \node[point,blue] (0B) at (-.75,0) {};
    \node[point,Green] (0G) at (.75,0) {};
    \coordinate (1G1) at (-1,-1) {};
    \coordinate (1R) at (-.333,-1) {};
    \coordinate (1G2) at (.333,-1) {};
    \coordinate (1B) at (1,-1) {};
    \coordinate (2B) at (-1,-2) {};
    \coordinate (2G1) at (-.333,-2) {};
    \coordinate (2G2) at (.333,-2) {};
    \coordinate (2R) at (1,-2) {};
    \coordinate (3B1) at (-1,-3) {};
    \node[point,blue] (3B2) at (1,-3) {};
    \node[point,blue] (4B) at (-.75,-4) {};
    \node[point,Green] (4G) at (.75,-4) {};
    \draw[Green] (0B) to[out=225,in=90] (1G1);
    \draw[red] (0B) to[out=315,in=90] (1R);
    \draw[Green] (0G) to[out=225,in=90] (1G2);
    \draw[blue] (0G) to[out=315,in=90] (1B);
    \draw[Green] (1G1) to[out=270,in=90] (2G1);
    \draw[red] (1R) to[out=270,in=90] (2R);
    \draw[Green] (1G2) to[out=270,in=90] (2G2);
    \draw[blue] (1B) to[out=270,in=90] (2B);
    \draw[blue] (2B) to[out=270,in=90] (3B1);
    \draw[strand,Green] (2G2) to[out=270,in=150] (3B2);
    \draw[strand,red] (2R) to (3B2);
    \draw[strand,blue] (3B1) to[out=270,in=90] (4B);
    \draw[strand,Green] (2G1) to[out=270,in=135] (4G);
    \draw[strand,blue] (3B2) to[out=270,in=45] (4G);
    \end{scope}
\end{tikzpicture}
\caption{A type 1 reduction.}
\label{sfig:1:reduction}
\end{subfigure}
\begin{subfigure}{.3\textwidth}
\centering
\begin{tikzpicture}[yscale=2/3]
    \begin{scope}[yshift=4.75cm]
    \node[point,blue] (0B1) at (-1,0) {};
    \node[point,Green] (0G) at (0,0) {};
    \node[point,blue] (0B2) at (1,0) {};
    \node[point,Green] (1G) at (.6,-1.25) {};
    \node[point,Green] (2G) at (.7,-2.75) {};
    \node[point,Green] (3G1) at (-1.2,-4) {};
    \node[point,red] (3R) at (-.4,-4) {};
    \node[point,Green] (3G2) at (.4,-4) {};
    \node[point,blue] (3B) at (1.2,-4) {};
    \draw[strand,Green] (0B1) to[out=240,in=90,looseness=.75] (3G1);
    \draw[strand,red] (0B1) to[out=300,in=90,looseness=.75] (3R);
    \draw[strand,Green] (0G) to[out=270,in=150] (1G);
    \draw[strand,blue] (0B2) to[out=270,in=35] (1G);
    \draw[strand,Green] (1G) to (2G);
    \draw[strand,Green] (2G) to[out=215,in=90] (3G2);
    \draw[strand,blue] (2G) to[out=330,in=90] (3B);
    \end{scope}
    \draw[-Stealth] (0,.3) to (0,-.3);
    \begin{scope}[yshift=-.75cm]
    \node[point,blue] (0B1) at (-1,0) {};
    \node[point,Green] (0G) at (0,0) {};
    \node[point,blue] (0B2) at (1,0) {};
    \node[point,Green] (3G1) at (-1.2,-4) {};
    \node[point,red] (3R) at (-.4,-4) {};
    \node[point,Green] (3G2) at (.4,-4) {};
    \node[point,blue] (3B) at (1.2,-4) {};
    \draw[strand,Green] (0B1) to[out=240,in=90,looseness=.75] (3G1);
    \draw[strand,red] (0B1) to[out=300,in=90,looseness=.75] (3R);
    \draw[strand,Green] (0G) to[out=270,in=90] (3G2);
    \draw[strand,blue] (0B2) to[out=270,in=90] (3B);
    \end{scope}
\end{tikzpicture}
\caption{A type 2 reduction.}
\label{sfig:2:reduction}
\end{subfigure}
\caption{Examples of the three types of reductions of $\Gamma$-strand diagrams.}
\label{fig:reductions}
\end{figure}

\textbf{Type 0:}
Assume that $v$ is a degenerate point and let $s_1$, $s_2$ be the strands terminating at and originating from $v$, respectively.
By \cref{def:gamma:strand:diagram}, they have the same color $c$.
Then replace $s_1, s_2$ and $v$ by a single $c$-colored strand $s$.

\textbf{Type 1:}
Assume that $v$ and $w$ are a $c$-colored split and a $c$-colored merge that are joined by strand $s_1, \dots, s_k$  so that the rotation systems are $\llparenthesis s^v, s_1, \dots s_k \rrparenthesis_v$ or $\llparenthesis s_1, \dots s_k \rrparenthesis_v$ (depending on whether $v$ is an internal split or a split-source) and $\llparenthesis s^w, s_k, \dots, s_1 \rrparenthesis_w$ or $\llparenthesis s_k, \dots, s_1 \rrparenthesis_w$ (depending on whether $w$ is an internal merge or a merge-sink).
Then remove $s_1, \dots, s_k$ and attach $s^v$ or $v$ and $s^w$ or $w$ (depending on whether they are internal) into a single $c$-colored strand.

\textbf{Type 2:}
Assume that $v$ and $w$ are a $c$-colored merge and a $c$-colored split that are joined by a strand $s$ and have rotation systems $\llparenthesis s^v, s^v_k, \dots s^v_1 \rrparenthesis_v$ and $\llparenthesis s^w, s^w_1, \dots, s^w_k \rrparenthesis_w$.
Then remove $v$, $w$ and $s$ and attach each strand $s^v_i$ with $s^w_i$ into a single strand.

Such reductions are well-defined thanks to the conditions of \cref{def:gamma:strand:diagram}.
Moreover, note that they do not alter the domain nor the range of diagrams.

A $\Gamma$-strand diagram is \textbf{reduced} if no reduction can be performed on it.
It is routine to check (and is proved in \cite{RearrConj}, although without type 0 reductions) that each equivalence class includes a unique reduced $\Gamma$-strand diagram.

\subsubsection{Composition of strand diagrams}
\label{sub:subgroupoid}

Let $\Groupoid(\Gamma)$ be the set of equivalence classes of $\Gamma$-strand diagrams.
Assume that $A$ is a $\Gamma$-strand diagram with domain $(a_1,\dots,a_i)$ and range $(b_1,\dots,b_j)$ and that $B$ is a $\Gamma$-strand diagram with domain $(b_1,\dots,b_j)$ and range $(c_1,\dots,c_k)$.
The composition $AB$ has diagram with domain $(a_1,\dots,a_i)$ and range $(c_1,\dots,c_k)$ obtained by gluing each sink $b_i$ of $A$ with the corresponding source $b_i$ of $B$.
For example, \cref{fig:composition} depicts the second power of the element from \cref{fig:element}.

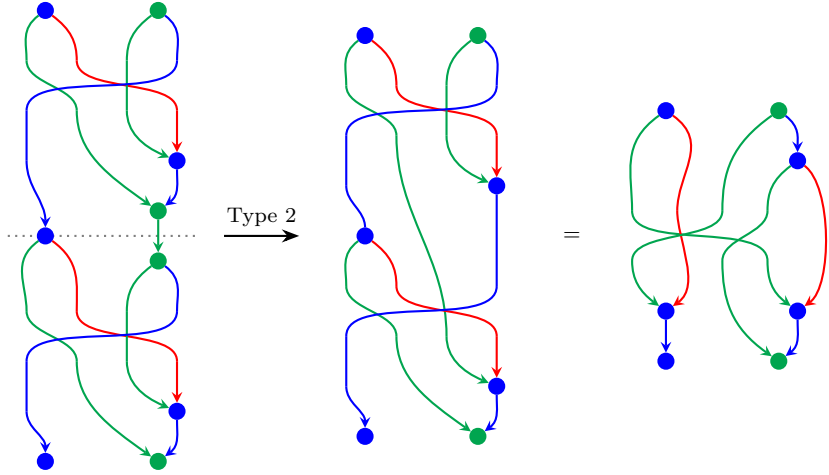
\begin{figure}
\centering
\begin{tikzpicture}[yscale=2/3,font=\footnotesize]
    \begin{scope}[xshift=-2.25cm]
    \node[point,blue] (0B) at (-.75,0) {};
    \node[point,Green] (0G) at (.75,0) {};
    \coordinate (1G1) at (-1,-1) {};
    \coordinate (1R) at (-.333,-1) {};
    \coordinate (1G2) at (.333,-1) {};
    \coordinate (1B) at (1,-1) {};
    \coordinate (2B) at (-1,-2) {};
    \coordinate (2G1) at (-.333,-2) {};
    \coordinate (2G2) at (.333,-2) {};
    \coordinate (2R) at (1,-2) {};
    \coordinate (3B1) at (-1,-3) {};
    \node[point,blue] (3B2) at (1,-3) {};
    \node[point,Green] (4G) at (.75,-4) {};
    \draw[gray,dotted] (-1.25,-4.5) to (1.25,-4.5);
    \node[point,blue] (45B) at (-.75,-4.5) {};
    \node[point,Green] (5G) at (.75,-5) {};
    \coordinate (6G1) at (-1,-6) {};
    \coordinate (6R) at (-.333,-6) {};
    \coordinate (6G2) at (.333,-6) {};
    \coordinate (6B) at (1,-6) {};
    \coordinate (7B) at (-1,-7) {};
    \coordinate (7G1) at (-.333,-7) {};
    \coordinate (7G2) at (.333,-7) {};
    \coordinate (7R) at (1,-7) {};
    \coordinate (8B1) at (-1,-8) {};
    \node[point,blue] (8B2) at (1,-8) {};
    \node[point,blue] (9B) at (-.75,-9) {};
    \node[point,Green] (9G) at (.75,-9) {};
    \draw[Green] (0B) to[out=225,in=90] (1G1);
    \draw[red] (0B) to[out=315,in=90] (1R);
    \draw[Green] (0G) to[out=225,in=90] (1G2);
    \draw[blue] (0G) to[out=315,in=90] (1B);
    \draw[Green] (1G1) to[out=270,in=90] (2G1);
    \draw[red] (1R) to[out=270,in=90] (2R);
    \draw[Green] (1G2) to[out=270,in=90] (2G2);
    \draw[blue] (1B) to[out=270,in=90] (2B);
    \draw[blue] (2B) to[out=270,in=90] (3B1);
    \draw[strand,Green] (2G2) to[out=270,in=150] (3B2);
    \draw[strand,red] (2R) to (3B2);
    \draw[strand,blue] (3B1) to[out=270,in=90] (45B);
    \draw[strand,Green] (2G1) to[out=270,in=135] (4G);
    \draw[strand,blue] (3B2) to[out=270,in=45] (4G);
    \draw[strand,Green] (4G) to (5G);
    \draw[Green] (45B) to[out=225,in=90] (6G1);
    \draw[red] (45B) to[out=315,in=90] (6R);
    \draw[Green] (5G) to[out=225,in=90] (6G2);
    \draw[blue] (5G) to[out=315,in=90] (6B);
    \draw[Green] (6G1) to[out=270,in=90] (7G1);
    \draw[red] (6R) to[out=270,in=90] (7R);
    \draw[Green] (6G2) to[out=270,in=90] (7G2);
    \draw[blue] (6B) to[out=270,in=90] (7B);
    \draw[blue] (7B) to[out=270,in=90] (8B1);
    \draw[strand,Green] (7G2) to[out=270,in=150] (8B2);
    \draw[strand,red] (7R) to (8B2);
    \draw[strand,blue] (8B1) to[out=270,in=90] (9B);
    \draw[strand,Green] (7G1) to[out=270,in=135] (9G);
    \draw[strand,blue] (8B2) to[out=270,in=45] (9G);
    \end{scope}
    \draw[-Stealth] (-.625,-4.5) to node[above]{Type 2} (.375,-4.5);
    \begin{scope}[xshift=2cm,yshift=-.5cm]
    \node[point,blue] (0B) at (-.75,0) {};
    \node[point,Green] (0G) at (.75,0) {};
    \coordinate (1G1) at (-1,-1) {};
    \coordinate (1R) at (-.333,-1) {};
    \coordinate (1G2) at (.333,-1) {};
    \coordinate (1B) at (1,-1) {};
    \coordinate (2B) at (-1,-2) {};
    \coordinate (2G1) at (-.333,-2) {};
    \coordinate (2G2) at (.333,-2) {};
    \coordinate (2R) at (1,-2) {};
    \coordinate (3B1) at (-1,-3) {};
    \node[point,blue] (3B2) at (1,-3) {};
    \node[point,blue] (4B) at (-.75,-4) {};
    \coordinate (5G1) at (-1,-5) {};
    \coordinate (5R) at (-.333,-5) {};
    \coordinate (5G2) at (.333,-5) {};
    \coordinate (5B) at (1,-5) {};
    \coordinate (6B) at (-1,-6) {};
    \coordinate (6G1) at (-.333,-6) {};
    \coordinate (6G2) at (.333,-6) {};
    \coordinate (6R) at (1,-6) {};
    \coordinate (7B1) at (-1,-7) {};
    \node[point,blue] (7B2) at (1,-7) {};
    \node[point,blue] (8B) at (-.75,-8) {};
    \node[point,Green] (8G) at (.75,-8) {};
    \draw[Green] (0B) to[out=225,in=90] (1G1);
    \draw[red] (0B) to[out=315,in=90] (1R);
    \draw[Green] (0G) to[out=225,in=90] (1G2);
    \draw[blue] (0G) to[out=315,in=90] (1B);
    \draw[Green] (1G1) to[out=270,in=90] (2G1);
    \draw[red] (1R) to[out=270,in=90] (2R);
    \draw[Green] (1G2) to[out=270,in=90] (2G2);
    \draw[blue] (1B) to[out=270,in=90] (2B);
    \draw[blue] (2B) to[out=270,in=90] (3B1);
    \draw[strand,Green] (2G2) to[out=270,in=150] (3B2);
    \draw[strand,red] (2R) to (3B2);
    \draw[blue] (3B1) to[out=270,in=90] (4B);
    \draw[Green] (2G1) to[out=270,in=90] (6G2);
    \draw[blue] (3B2) to (5B);
    \draw[Green] (4B) to[out=225,in=90] (5G1);
    \draw[red] (4B) to[out=315,in=90] (5R);
    \draw[Green] (5G1) to[out=270,in=90] (6G1);
    \draw[red] (5R) to[out=270,in=90] (6R);
    \draw[blue] (5B) to[out=270,in=90] (6B);
    \draw[blue] (6B) to[out=270,in=90] (7B1);
    \draw[strand,Green] (6G2) to[out=270,in=150] (7B2);
    \draw[strand,red] (6R) to (7B2);
    \draw[strand,blue] (7B1) to[out=270,in=90] (8B);
    \draw[strand,Green] (6G1) to[out=270,in=135] (8G);
    \draw[strand,blue] (7B2) to[out=270,in=45] (8G);
    \end{scope}
    \node at (4,-4.5) {$=$};
    \begin{scope}[xshift=6cm,yshift=-2cm]
    \node[point,blue] (0B) at (-.75,0) {};
    \node[point,Green] (0G) at (.75,0) {};
    \node[point,blue] (1B) at (1,-1) {};
    \coordinate (2G1) at (-1.2,-2) {};
    \coordinate (2R1) at (-.6,-2) {};
    \coordinate (2G2) at (0,-2) {};
    \coordinate (2G3) at (.6,-2) {};
    \coordinate (3G1) at (-1.2,-3) {};
    \coordinate (3G2) at (0,-3) {};
    \coordinate (3G3) at (.6,-3) {};
    \node[point,blue] (4B1) at (-.75,-4) {};
    \node[point,blue] (4B2) at (1,-4) {};
    \node[point,blue] (5B) at (-.75,-5) {};
    \node[point,Green] (5G) at (.75,-5) {};
    \draw[Green] (0B) to[out=225,in=90] (2G1);
    \draw[red] (0B) to[out=315,in=90] (2R1);
    \draw[Green] (0G) to[out=225,in=90] (2G2);
    \draw[strand,blue] (0G) to[out=315,in=90] (1B);
    \draw[Green] (1B) to[out=225,in=90] (2G3);
    \draw[strand,red] (1B) to[out=315,in=45,looseness=.5] (4B2);
    \draw[Green] (2G1) to[out=270,in=90] (3G3);
    \draw[strand,red] (2R1) to[out=270,in=45,looseness=.75] (4B1);
    \draw[Green] (2G2) to[out=270,in=90] (3G1);
    \draw[Green] (2G3) to[out=270,in=90] (3G2);
    \draw[strand,Green] (3G1) to[out=270,in=135] (4B1);
    \draw[strand,Green] (3G3) to[out=270,in=135] (4B2);
    \draw[strand,blue] (4B1) to[out=270,in=90] (5B);
    \draw[strand,Green] (3G2) to[out=270,in=135] (5G);
    \draw[strand,blue] (4B2) to[out=270,in=45] (5G);
    \end{scope}
\end{tikzpicture}
\caption{A composition of $\Gamma$-strand diagrams, with application of reductions.}
\label{fig:composition}
\end{figure}

The diagram $AB$ produced in this way is a $\Gamma$-strand diagram.
This composition is well-defined (i.e., if $[A]=[A']$ and $[B]=[B']$ then $[AB]=[A'B']$) and it is associative and has inverses, so it defines a groupoid structure on $\Groupoid(\Gamma)$.

\subsubsection{Generators of the groupoid}
\label{ssub:groupoid:generators}

We define three types of minimal diagrams that will be useful in \cref{sub:similarities}.

A \textbf{permutation diagram} is a $\Gamma$-strand diagram with no split, no merge and no degenerate points.
Such a diagram simply permutes multisets over $V\Gamma$.

A \textbf{split diagram} is a $\Gamma$-strand diagram consisting solely of splits surrounded by straight strands.
Such a diagram simply appends carets.

A \textbf{merge diagram} is a diagram consisting of merges surrounded solely by straight strands.
Such a diagram simply appends inverted carets.
Each merge diagram is the inverse of a split diagram and viceversa.

Every $\Gamma$-strand diagram can be sliced into pieces of these types, so:

\begin{proposition}
The groupoid $\Groupoid(\Gamma)$ is generated by the (infinite) set of equivalence classes of all permutation diagrams and split diagrams.
\end{proposition}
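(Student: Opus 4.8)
The plan is to slice a representative diagram horizontally into single-point layers and recognize each layer as one of the three generators, using that splits, merges and degenerate points are the only internal point types. Fix an element of $\Groupoid(\Gamma)$ and choose any $\Gamma$-strand diagram $A$ representing it. First I would apply type 0 reductions until $A$ has no degenerate points; this does not change the equivalence class, and afterwards every internal point of $A$ is a split or a merge, while the remaining points are (univalent, split- or merge-) sources and sinks.

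Next I would exploit that $A$ is a finite acyclic graph. Acyclicity yields a topological ordering $p_1, \dots, p_n$ of its internal points such that no strand runs from $p_j$ to $p_i$ with $i < j$. I would then cut $A$ by $n-1$ generic horizontal arcs, one between each consecutive pair of points in this order, producing cross-sections $X_0, \dots, X_n$, where $X_0$ is the tuple of sources, $X_n$ is the tuple of sinks, and $X_i$ is obtained from $X_{i-1}$ by performing the single split or merge at $p_i$. Each $X_i$ is a finite color-labelled tuple; fixing a linear order on each $X_i$ (extending the given orders on sources and sinks) turns it into a legitimate domain/range, and the piece of $A$ lying between the $(i-1)$-st and $i$-th cut becomes a $\Gamma$-strand diagram $A_i$ with domain $X_{i-1}$ and range $X_i$ whose only internal point is $p_i$. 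By construction $[A] = [A_1][A_2]\cdots[A_n]$ in $\Groupoid(\Gamma)$, since gluing the cross-sections back together reassembles $A$.

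Finally I would recognize each single-point slice. A slice $A_i$ whose unique internal point is a split consists of one split surrounded by straight strands, so after pre- and post-composing with the permutation diagrams that move the relevant strands into adjacent order it is exactly a split diagram; symmetrically, a slice with a single merge is a merge diagram up to permutation diagrams, and a merge diagram is the inverse of a split diagram. Collecting the routing permutations writes $[A]$ as a product of (equivalence classes of) permutation and split diagrams together with their inverses, proving the claim. The only delicate point is bookkeeping: one must choose the linear orders on the intermediate cross-sections $X_i$ coherently and verify, using the rotation-system and coloring conditions of \cref{def:gamma:strand:diagram}, that each slice really is one of the three generator types and that the permutation diagrams absorbing the reorderings are color-preserving. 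The topological sort itself is routine; the care lies entirely in tracking these orders.
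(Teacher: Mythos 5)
Your proposal is correct and follows essentially the same route as the paper, which simply asserts that every $\Gamma$-strand diagram can be sliced into permutation, split and merge diagrams (the latter being inverses of split diagrams); you have filled in the details of that slicing via a topological sort, and sensibly disposed of degenerate points by type 0 reductions first, since a lone degenerate point would not fit any of the three generator types.
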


For example, the diagram depicted in \cref{sfig:strand:diagram} can be decomposed as the composition of a split diagram (with two splits), a permutation diagram and two merge diagrams.
Although unneeded here, any $\Gamma$-strand diagram can be reduced and written as a composition of split diagrams followed by a permutation diagram followed by merge diagrams (see \cite[Proposition 2.8]{RearrConj}).

\subsection{The group \texorpdfstring{$\G(\Gamma|Y)$}{G(Gamma|Y)} as a subgroupoid of \texorpdfstring{$\Groupoid(\Gamma)$}{G(Gamma)}}

Let $(F_D,f,F_R)$ be a forest pair diagram for an element of $\G(\Gamma|Y)$.
Fix a linear order of $Y$, which translates to a linear order of the roots of $F_D$ and $F_R$.
One obtains a $\Gamma$-strand diagram by drawing an upside-down copy of $F_R$ below $F_D$ and gluing each leaf $l$ of $F_D$ with the leaf $f(l)$ of $F_R$.
Every $\Gamma$-strand diagram obtained in this way has domain and range both equal to $(y_1, \dots, y_k)$, corresponding to the linear order of $Y=\{y_1,\dots,y_k\}$.

Conversely, every reduced $\Gamma$-strand diagram can be cut uniquely into two forests $F_D$, corresponding to the splits, and $F_R$, corresponding to the merges (see \cite[Lemma 2.7]{RearrConj}).
If the domain and range of the $\Gamma$-strand diagram both equal $(y_1,\dots,y_k)$, then $F_D$ and $F_R$ are finite complete subforests of $\F(\Gamma|Y)$.
The leaves of $F_D$ and $F_R$ are ordered according to the rotation system of the $\Gamma$-strand diagram.
If $f$ denotes the bijection between the leaves of $F_D$ and those of $F_R$, then $(F_D,f,F_R)$ is a forest pair diagram for an element of $\G(\Gamma|Y)$.

Once a linear order of $Y$ (thus of the roots of $\F(\Gamma|Y)$) is fixed, these two operations (gluing forest pair diagrams; cutting reduced $\Gamma$-strand diagrams) are the inverse of each other.
One can check that compositions of forest pair diagrams correspond to compositions of $\Gamma$-strand diagrams.
Type 0 and 1 reductions of $\Gamma$-strand diagrams (\cref{ssub:reductions}) correspond to degenerate and regular reductions of forest pair diagrams (\cref{sub:forest:pair:diagrams}), while type 2 reductions allow to compute subforests that contain the domain forest of the first element and the range forest of the second element.
Ultimately, we have the following fact.

\begin{proposition}
\label{prop:TFGoES:is:subgroupoid}
Fix a linear order $y_1 < \dots < y_k$ of the elements of a multiset $Y$ with underlying set $V\Gamma$.
The group $\G(\Gamma|Y)$ is isomorphic to the subgroupoid of $\Groupoid(\Gamma)$ of all elements whose domain and range are both $(y_1,\dots,y_k)$.
\end{proposition}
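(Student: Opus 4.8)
The plan is to realize the isomorphism concretely through the gluing and cutting operations described above, checking in turn that gluing descends to equivalence classes, that it is a bijection with inverse given by cutting, and finally that it respects composition. Write $\mathsf{H}$ for the subgroupoid of $\Groupoid(\Gamma)$ whose elements have domain and range both equal to $(y_1,\dots,y_k)$; since any two such elements are composable, $\mathsf{H}$ is a group. First I would define the forward map $\Psi \colon \G(\Gamma|Y) \to \mathsf{H}$ by sending a homeomorphism $\phi$ to the class $[A_\phi]$, where $A_\phi$ is obtained by gluing the unique reduced forest pair diagram $(F_D,f,F_R)$ of $\phi$. Because the roots of $F_D$ and $F_R$ are ordered by the fixed order $y_1 < \dots < y_k$, the sources and sinks of $A_\phi$ are both $(y_1,\dots,y_k)$, so indeed $[A_\phi]\in\mathsf{H}$.

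Next I would establish that $\Psi$ is well defined, i.e.\ that equivalent forest pair diagrams glue to equivalent strand diagrams; it suffices to check this on the generating moves. A regular expansion attaches matching carets at $v$ and $f(v)$, which on the glued diagram inserts a split directly above a merge joined by the caret's strands, namely the inverse of a type 1 reduction. A degenerate expansion deletes a forced child $pe$ with $C_p=C_{pe}$, which on the glued side is the inverse of a type 0 reduction, using the fourth clause of \cref{def:gamma:strand:diagram} together with \cref{ass:no:out:degree:1}. Thus $\Psi$ is well defined, and the same local dictionary makes cutting well defined on reduced diagrams.

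For bijectivity I would use cutting as the inverse. A reduced element of $\mathsf{H}$ cuts uniquely into forests $F_D$ (its splits) and $F_R$ (its merges); being reduced, it admits no type 0 or 1 reduction, so the resulting $(F_D,f,F_R)$ admits no degenerate or regular reduction and is reduced. Conversely, a glued reduced forest pair diagram has all splits above all merges, hence contains no merge-split pair and therefore no available type 2 reduction, so gluing a reduced forest pair diagram produces a reduced strand diagram. Gluing and cutting are therefore mutually inverse bijections between reduced forest pair diagrams and reduced elements of $\mathsf{H}$, and combined with the uniqueness of reduced representatives on each side (stated earlier for forest pair diagrams and for $\Gamma$-strand diagrams) this shows $\Psi$ is a bijection.

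The main point, and the expected obstacle, is that $\Psi$ is a homomorphism. Given $\phi,\psi$ with forest pair diagrams $\mathfrak{f}=(F_D,f,F_R)$ and $\mathfrak{g}=(G_D,g,G_R)$, I would choose a common finite complete rooted subforest $E \supseteq F_R,G_D$ and expand to $(F_D^*,f^*,E)$ and $(E,g^*,G_R^*)$, so that $\phi\psi$ is represented by $(F_D^*,f^*g^*,G_R^*)$. By the well-definedness just proved, $\Psi(\phi)$ and $\Psi(\psi)$ are represented by the glued diagrams of these expanded pairs; stacking them places an inverted copy of $E$ (merges) immediately above a copy of $E$ (splits), with leaves identified compatibly in color and order. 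I would then argue by induction on $E$ that this middle configuration collapses to straight strands under a sequence of type 2 reductions: a single caret stacked on its mirror merge cancels by one type 2 reduction, and the general case follows caret by caret. What remains is exactly the glued diagram of $(F_D^*,f^*g^*,G_R^*)$, giving $\Psi(\phi)\Psi(\psi)=\Psi(\phi\psi)$. The delicate part of this induction is verifying that at each stage some merge of the inverted $E$ sits directly atop the matching split of $E$ with compatible rotation systems, so that the hypotheses of a type 2 reduction are genuinely met; the color and order bookkeeping enforced by \cref{def:gamma:strand:diagram} and the fixed order on $Y$ is precisely what guarantees this matching, after which the remaining verifications are routine.
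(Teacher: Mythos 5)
Your proposal is correct and follows essentially the same route as the paper, which states this proposition after an informal discussion of exactly the ingredients you formalize: gluing/cutting as mutually inverse operations on reduced representatives, the dictionary matching degenerate and regular moves on forest pair diagrams with type 0 and type 1 moves on strand diagrams, and the collapse of the stacked common subforest $E$ by type 2 reductions to verify compatibility with composition. The only quibble is a labelling one with no mathematical content: whether a degenerate expansion corresponds to a type 0 reduction or to its inverse depends on a naming convention (which the paper itself arguably has backwards), and either reading yields the same generated equivalence relation, so your well-definedness argument stands.
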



\section{Closed strand diagrams and conjugacy classes}

In this section we will see how transformations of certain closed diagrams encode conjugacy in the groupoid.
This will be used in the following section to solve the problem of deciding whether, given two $\Gamma$-strand diagrams with the same domain and range, they represent conjugate elements of $\Groupoid(\Gamma)$.

\begin{remark}
By \cref{prop:TFGoES:is:subgroupoid}, solving the conjugacy problem in the groupoid $\Groupoid(\Gamma)$ allows to solve it in the group $\G(\Gamma|Y)$.
Indeed, if $g,f \in \Groupoid(\Gamma)$ have domain and range both corresponding to a linear order of some multiset $Y$ (so $x,y \in \G(\Gamma|Y)$) and if $z \in \Groupoid(\Gamma)$ is such that $g = h f h^{-1}$, the domain and range of $h$ must be the same as those of $g$ and $f$, meaning that $h \in \G(\Gamma|Y)$.
\end{remark}

\subsection{Closing a strand diagram}

Consider a $\Gamma$-strand diagram whose domain and range are the same (equivalently, it represents an element of some group $\G(\Gamma|Y)$).
Such diagram can be \textit{closed} by identifying each source with the corresponding sink;
when we identify a split with a merge, we add a strand joining them.
A \textbf{$\Gamma$-closed strand diagram} is the result of such operation, see for example \cref{fig:closed:strand:diagram}.

We let $\C$ denote the map sending a $\Gamma$-strand diagram to its $\Gamma$-closed strand diagram.
This is a bijection between the set of $\Gamma$-strand diagrams with equal domain and range and the set of $\Gamma$-closed strand diagrams.

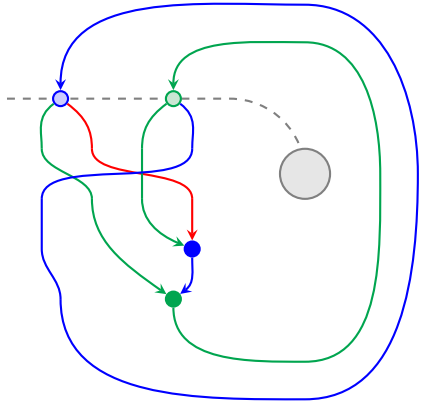
\begin{figure}
\centering
\begin{tikzpicture}
    \draw[gray,dashed] (0,0) to[out=90,in=0] (-1,1) to (-4,1);
    \draw[gray,fill=gray!20] (0,0) circle (.333cm);
    \begin{scope}[xshift=-2.5cm,yshift=1cm,yscale=2/3]
    \node[point,blue,fill=blue!20] (0B) at (-.75,0) {};
    \node[point,Green,fill=Green!20] (0G) at (.75,0) {};
    \coordinate (1G1) at (-1,-1) {};
    \coordinate (1R) at (-.333,-1) {};
    \coordinate (1G2) at (.333,-1) {};
    \coordinate (1B) at (1,-1) {};
    \coordinate (2B) at (-1,-2) {};
    \coordinate (2G1) at (-.333,-2) {};
    \coordinate (2G2) at (.333,-2) {};
    \coordinate (2R) at (1,-2) {};
    \coordinate (3B1) at (-1,-3) {};
    \node[point,blue] (3B2) at (1,-3) {};
    \coordinate (4B) at (-.75,-4) {};
    \node[point,Green] (4G) at (.75,-4) {};
    \draw[Green] (0B) to[out=225,in=90] (1G1);
    \draw[red] (0B) to[out=315,in=90] (1R);
    \draw[Green] (0G) to[out=225,in=90] (1G2);
    \draw[blue] (0G) to[out=315,in=90] (1B);
    \draw[Green] (1G1) to[out=270,in=90] (2G1);
    \draw[red] (1R) to[out=270,in=90] (2R);
    \draw[Green] (1G2) to[out=270,in=90] (2G2);
    \draw[blue] (1B) to[out=270,in=90] (2B);
    \draw[blue] (2B) to[out=270,in=90] (3B1);
    \draw[strand,Green] (2G2) to[out=270,in=150] (3B2);
    \draw[strand,red] (2R) to (3B2);
    \draw[blue] (3B1) to[out=270,in=90] (4B);
    \draw[strand,Green] (2G1) to[out=270,in=135] (4G);
    \draw[strand,blue] (3B2) to[out=270,in=45] (4G);
    \end{scope}
    \draw[strand,Green] (4G) to[out=270,in=180] (0,-2.5) to[out=0,in=270] (1,0) to[out=90,in=0] (0,1.75) to[out=180,in=90] (0G);
    \draw[strand,blue] (4B) to[out=270,in=180] (0,-3) to[out=0,in=270] (1.5,0) to[out=90,in=0] (0,2.25) to[out=180,in=90] (0B);
\end{tikzpicture}
\caption{The $\Gamma$-closed strand diagram for the diagram depicted in \cref{sfig:strand:diagram}.}
\label{fig:closed:strand:diagram}
\end{figure}

The \textbf{base line} of the $\Gamma$-closed strand diagram $\C(F)$ is the the domain and range (which are the same) of the underlying $\Gamma$-strand diagram $F$;
this is a linear order of a multiset $Y$ with underlying set $V\Gamma$.
The points of the base line are referred to as \textbf{base points}.
In drawings, the base line is represented as a dashed line that goes through the diagram at the base points, in their order, terminating at a ``hole'' in the middle of the diagram.

In addition to base points, we reprise the names of splits, merges and degenerate points from $\Gamma$-strand diagrams.
Note that points of in- and out-degree $1$ of $\C(F)$ are either base points or they are degenerate points of $F$.

\subsection{Similarities of closed strand diagrams}
\label{sub:similarities}

We define two types of transformations of $\Gamma$-closed strand diagrams that involve the base line.
Together, such transformations are called \textbf{similarities} and they will essentially allow us to forget about the base line.

\subsubsection{Base line shifts of closed strand diagrams}

Assume that a $\Gamma$-closed strand diagram $\C(F)$ has base line $(y_1, \dots, y_k)$.
Pictorially, base line shifts (which we define right below) correspond to moving the base line through a split or a merge.
For example, the diagram depicted in \cref{fig:base:line:shift} is the result of an expanding base line shift of the one depicted in \cref{fig:closed:strand:diagram}.
The converse transformation is a reducing base line shift.

Suppose that in $F$ a source $y_i$ is immediately followed by a split or that a sink $y_i$ is immediately preceded by a merge.
An \textbf{expanding base line shift} consists of moving the split (respectively, merge) from below the source $y_i$ (above the sink $y_i$) to below the corresponding sink $y_i$ (above the corresponding source $y_i$), obtaining a strand diagram $F^*$, and then taking $\C(F^*)$.
Note that $\C(F^*)$ is still a $\Gamma$-closed strand diagram, since the source and the sink corresponding to $y_i$ have the same color, and it has more points on the base line than before.

Suppose that in $F$ consecutive sources $y_i, \dots, y_{i+k}$ are all of the predecessors of a merge or that consecutive sinks $y_i, \dots, y_{i+k}$ are all of the successors of a split.
A \textbf{reducing base line shift} consists of moving the merge (respectively, split) from below the sources $y_i, \dots, y_{i+1}$ (above the sinks $y_i, \dots, y_{i+k}$) to below the corresponding sinks (above the corresponding sources), obtaining a strand diagram $F^*$ and then taking $\C(F^*)$.
The result is a $\Gamma$-closed strand diagram, this time with less points on its base line than before.

\begin{figure}
\centering
\begin{tikzpicture}
    \draw[gray,dashed] (0,0) to[out=90,in=0] (-1,1) to (-4,1);
    \draw[gray,fill=gray!20] (0,0) circle (.333cm);
    \begin{scope}[xshift=-2.5cm,yshift=1cm,yscale=2/3]
    \node[point,Green] (aG) at (.75,1) {};
    \node[point,blue,fill=blue!20] (0B1) at (-.75,0) {};
    \node[point,Green,fill=Green!20] (0G) at (.125,0) {};
    \node[point,blue,fill=blue!20] (0B2) at (1,0) {};
    \coordinate (1G1) at (-1,-1) {};
    \coordinate (1R) at (-.333,-1) {};
    \coordinate (1G2) at (.333,-1) {};
    \coordinate (1B) at (1,-1) {};
    \coordinate (2B) at (-1,-2) {};
    \coordinate (2G1) at (-.333,-2) {};
    \coordinate (2G2) at (.333,-2) {};
    \coordinate (2R) at (1,-2) {};
    \coordinate (3B1) at (-1,-3) {};
    \node[point,blue] (3B2) at (1,-3) {};
    \coordinate (4B) at (-.75,-4) {};
    \node[point,Green] (4G) at (.75,-4) {};
    \draw[Green] (0B1) to[out=225,in=90] (1G1);
    \draw[red] (0B1) to[out=315,in=90] (1R);
    \draw[Green] (0G) to[out=270,in=90] (1G2);
    \draw[blue] (0B2) to[out=270,in=90] (1B);
    \draw[Green] (1G1) to[out=270,in=90] (2G1);
    \draw[red] (1R) to[out=270,in=90] (2R);
    \draw[Green] (1G2) to[out=270,in=90] (2G2);
    \draw[blue] (1B) to[out=270,in=90] (2B);
    \draw[blue] (2B) to[out=270,in=90] (3B1);
    \draw[strand,Green] (2G2) to[out=270,in=150] (3B2);
    \draw[strand,red] (2R) to (3B2);
    \draw[blue] (3B1) to[out=270,in=90] (4B);
    \draw[strand,Green] (2G1) to[out=270,in=135] (4G);
    \draw[strand,blue] (3B2) to[out=270,in=45] (4G);
    \end{scope}
    \draw[strand,Green] (4G) to[out=270,in=180] (0,-2.5) to[out=0,in=270] (1,0) to[out=90,in=0] (0,2.25) to[out=180,in=90] (aG);
    \draw[strand,blue] (4B) to[out=270,in=180] (0,-3) to[out=0,in=270] (1.5,0) to[out=90,in=0] (0,2.75) to[out=180,in=90,looseness=1.25] (0B1);
    \draw[strand,Green] (aG) to[out=190,in=90] (0G);
    \draw[strand,blue] (aG) to[out=320,in=90] (0B2);
\end{tikzpicture}
\caption{A base line shift of the $\Gamma$-closed strand diagram from \cref{fig:closed:strand:diagram}.}
\label{fig:base:line:shift}
\end{figure}
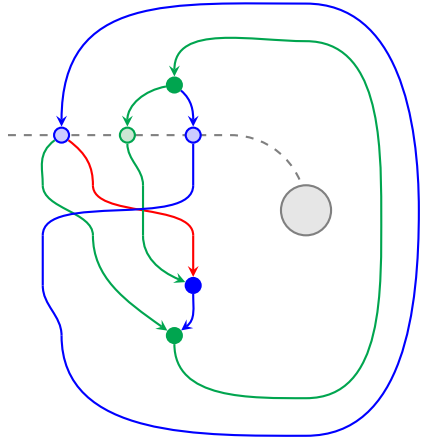

\begin{remark}
\label{rmk:shift:is:conjugacy}
If $\C(F)$ is a $\Gamma$-closed strand diagram and $\C(F^*)$ is the result of a base line shift, then $F^* = G F G^{-1}$ for a $\Gamma$-strand diagram $G$ that is a split diagram if the base line shift is expanding and a merge diagram if the base line shift is reducing.
In particular, $F$ and $F^*$ are conjugate in $\Groupoid(\Gamma)$.
\end{remark}

\subsubsection{Base line permutations of closed strand diagrams}

Consider a $\Gamma$-closed strand diagram.
As portrayed \cref{fig:permutation}, a \textbf{base line permutation} is the result of permuting the base points of such diagram.

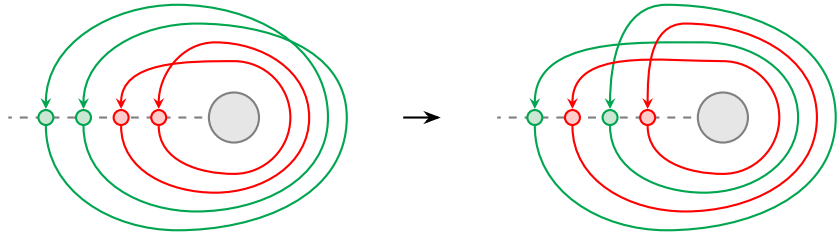
\begin{figure}
\centering
\begin{tikzpicture}
    \begin{scope}[xshift=-3.25cm]
    \draw[gray,dashed] (0,0) to (-3,0);
    \draw[gray,fill=gray!20] (0,0) circle (.333cm);
    \node[point,Green,fill=Green!20] (G1) at (-2.5,0) {};
    \node[point,Green,fill=Green!20] (G2) at (-2,0) {};
    \node[point,red,fill=red!20] (R1) at (-1.5,0) {};
    \node[point,red,fill=red!20] (R2) at (-1,0) {};
    \draw[strand,Green] (G1) to[out=270,in=180] (-.75,-1.5) to[out=0,in=270] (1.5,0) to[out=90,in=0] (-.5,1.25) to[out=180,in=90] (G2);
    \draw[strand,Green] (G2) to[out=270,in=180] (-.5,-1.25) to[out=0,in=270] (1.25,0) to[out=90,in=0] (-.75,1.5) to[out=180,in=90] (G1);
    \draw[strand,red] (R1) to[out=270,in=180] (-.25,-1) to[out=0,in=270] (1,0) to[out=90,in=0] (-.25,1) to[out=180,in=90] (R2);
    \draw[strand,red] (R2) to[out=270,in=180] (0,-.75) to[out=0,in=270] (.75,0) to[out=90,in=0] (0,.75) to[out=180,in=90] (R1);
    \end{scope}
    \draw[-Stealth] (-1,0) to (-.5,0);
    \begin{scope}[xshift=3.25cm]
    \draw[gray,dashed] (0,0) to (-3,0);
    \draw[gray,fill=gray!20] (0,0) circle (.333cm);
    \node[point,Green,fill=Green!20] (G1) at (-2.5,0) {};
    \node[point,red,fill=red!20] (R1) at (-2,0) {};
    \node[point,Green,fill=Green!20] (G2) at (-1.5,0) {};
    \node[point,red,fill=red!20] (R2) at (-1,0) {};
    \draw[strand,Green] (G1) to[out=270,in=180] (-.75,-1.5) to[out=0,in=270] (1.5,0) to[out=90,in=0] (-.75,1.5) to[out=180,in=90] (G2);
    \draw[strand,Green] (G2) to[out=270,in=180] (-.25,-1) to[out=0,in=270] (1,0) to[out=90,in=0] (-.25,1) to[out=180,in=90] (G1);
    \draw[strand,red] (R1) to[out=270,in=180] (-.5,-1.25) to[out=0,in=270] (1.25,0) to[out=90,in=0] (-.5,1.25) to[out=180,in=90] (R2);
    \draw[strand,red] (R2) to[out=270,in=180] (0,-.75) to[out=0,in=270] (.75,0) to[out=90,in=0] (0,.75) to[out=180,in=90] (R1);
    \end{scope}
\end{tikzpicture}
\caption{A base line permutation of a $\Gamma$-closed strand diagram.}
\label{fig:permutation}
\end{figure}

\begin{remark}
\label{rmk:permutation:is:conjugacy}
If $\C(F)$ is a $\Gamma$-closed strand diagram and $\C(F^*)$ is the result of a base line permutation, then $F^* = P F P^{-1}$ for a permutation diagram $P$.
In particular, $F$ and $F^*$ are conjugate in $\Groupoid(\Gamma)$.
\end{remark}

\subsection{Reductions of closed strand diagrams}
\label{sub:type:3:reduction}

A $\Gamma$-closed strand diagram can also be transformed with \textbf{reductions} of four types, which we describe below.

\subsubsection{Reductions of type 0, 1 and 2}

Type 0, 1 and 2 reductions of a $\Gamma$-closed strand diagram $\C(F)$ are the same as those described in \cref{ssub:reductions} for the underlying $\Gamma$-strand diagram $F$.

\begin{remark}
\label{rmk:equivalent:open:then:equivalent:closed}
Reductions of these types do not modify the element of $\Groupoid(\Gamma)$ represented by $F$.
Thus, $[F]=[F^*]$ if and only if $\C(F)$ can be transformed into $\C(F^*)$ using reductions of type 0, 1 and 2.
\end{remark}

Such reductions often need to be ``unlocked'' by a base line shift, as the base line may lie between a merge and a split (see, for example, \cref{fig:closed:strand:diagram,,fig:base:line:shift}).

\subsubsection{Reductions of type 3}

Let us say that a \textbf{loop} in a $\Gamma$-closed strand diagram is a cycle whose points all belong to the base line.
Note that the points of a loop all share the same color.
Although not necessary for our purposes, one can show that loops of $\C(F)$ correspond to cylinders that are pointwise periodic under $F$.

For the following definition, refer to the example in \cref{fig:3:reduction}.

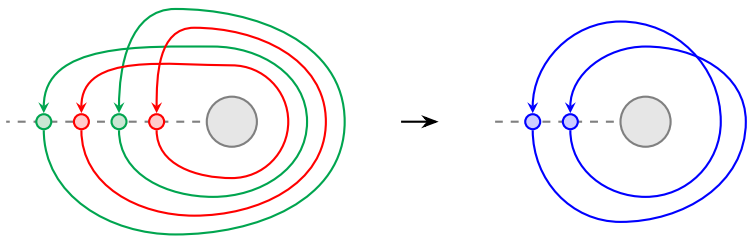
\begin{figure}
\centering
\begin{tikzpicture}
    \begin{scope}[xshift=-3.25cm]
    \draw[gray,dashed] (0,0) to (-3,0);
    \draw[gray,fill=gray!20] (0,0) circle (.333cm);
    \node[point,Green,fill=Green!20] (G1) at (-2.5,0) {};
    \node[point,red,fill=red!20] (R1) at (-2,0) {};
    \node[point,Green,fill=Green!20] (G2) at (-1.5,0) {};
    \node[point,red,fill=red!20] (R2) at (-1,0) {};
    \draw[strand,Green] (G1) to[out=270,in=180] (-.75,-1.5) to[out=0,in=270] (1.5,0) to[out=90,in=0] (-.75,1.5) to[out=180,in=90] (G2);
    \draw[strand,Green] (G2) to[out=270,in=180] (-.25,-1) to[out=0,in=270] (1,0) to[out=90,in=0] (-.25,1) to[out=180,in=90] (G1);
    \draw[strand,red] (R1) to[out=270,in=180] (-.5,-1.25) to[out=0,in=270] (1.25,0) to[out=90,in=0] (-.5,1.25) to[out=180,in=90] (R2);
    \draw[strand,red] (R2) to[out=270,in=180] (0,-.75) to[out=0,in=270] (.75,0) to[out=90,in=0] (0,.75) to[out=180,in=90] (R1);
    \end{scope}
    \draw[-Stealth] (-1,0) to (-.5,0);
    \begin{scope}[xshift=2.25cm]
    \draw[gray,dashed] (0,0) to (-2,0);
    \draw[gray,fill=gray!20] (0,0) circle (.333cm);
    \node[point,blue,fill=blue!20] (B1) at (-1.5,0) {};
    \node[point,blue,fill=blue!20] (B2) at (-1,0) {};
    \draw[strand,blue] (B1) to[out=270,in=180] (-.333,-1.333) to[out=0,in=270] (1.333,0) to[out=90,in=0] (0,1) to[out=180,in=90] (B2);
    \draw[strand,blue] (B2) to[out=270,in=180] (0,-1) to[out=0,in=270] (1,0) to[out=90,in=0] (-.333,1.333) to[out=180,in=90] (B1);
    \end{scope}
\end{tikzpicture}
\caption{A type 3 reduction, based on the graph $\Gamma$ from \cref{ex:edge:shift}.}
\label{fig:3:reduction}
\end{figure}

\textbf{Type 3:}
Assume that a $\Gamma$-closed strand diagram has a consecutive sequence of base points $(y_{i+1}, y_{i+2} \dots, y_{i+kd})$ such that, for all $j \in \{1,\dots,d\}$, there is a loop with points $y_{i+j}, y_{i+j+d} \dots y_{i+j+(k-1)d}$ (in this order).
Further assume that, if the loops are colored by $v_1, \dots, v_d \in V\Gamma$, there exists a vertex $v$ of $\Gamma$ such that $\llbracket e_1, \dots e_k \rrbracket_v$ and $\term(e_j)=v_j$.
Then replace all these loops and their points with a single $v$-colored loop with new points $y'_{i+1}, \dots, y'_{i+d}$ (in this order).

Such reductions are often ``unlocked'' by a base line permutation, as the base points need to be correctly ordered (see, for example, \cref{fig:permutation,,fig:3:reduction}).

\begin{remark}
\label{rmk:reduction:is:conjugacy}
If $\C(F)$ is a $\Gamma$-closed strand diagram and $\C(F^*)$ is the result of a reduction, then $F^* = G F G^{-1}$.
The $\Gamma$-strand diagram $G$ is trivial when the reduction is of type 0, 1 or 2 (thus $F$ and $F^*$ are equivalent) and, if the reduction is of type 3, then $G$ is a composition of split diagrams (the amount of which is the winding number denoted by $k$ in \cref{sub:type:3:reduction}).
In particular, whatever the type of reduction, $F$ and $F^*$ are conjugate in $\Groupoid(\Gamma)$.
\end{remark}

\subsection{Solving the conjugacy problem}
\label{sub:algorithm}

Let us describe our tools for tackling the conjugacy problem.

\subsubsection{Equivalence of closed strand diagrams}

Two $\Gamma$-closed strand diagrams are \textbf{equivalent} if one can be obtained from the other using similarities, reductions or the inverse of reductions.

\begin{proposition}
\label{prop:conjugacy:is:equivalence}
Let $F$ and $G$ be $\Gamma$-strand diagrams.
Then $[F]$ and $[G]$ are conjugate in $\Groupoid(\Gamma)$ if and only if $\C(F)$ and $\C(G)$ are equivalent.
\end{proposition}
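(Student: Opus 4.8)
The plan is to prove the two implications separately. The forward (``if'') direction is a direct assembly of the remarks already established, so I would dispatch it first. Suppose $\C(F)$ and $\C(G)$ are equivalent, i.e.\ connected by a finite sequence of similarities, reductions, and inverses of reductions. Each elementary move in this sequence realizes a conjugacy in $\Groupoid(\Gamma)$: base line shifts give conjugation by a split or merge diagram (\cref{rmk:shift:is:conjugacy}), base line permutations give conjugation by a permutation diagram (\cref{rmk:permutation:is:conjugacy}), and reductions of any type give conjugation (\cref{rmk:reduction:is:conjugacy}); inverses of reductions are conjugations as well, being inverses of conjugations. Composing the conjugators of the individual moves yields a single $H$ with $[G] = [HFH^{-1}]$, so $[F]$ and $[G]$ are conjugate.

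For the ``only if'' direction, suppose $[G] = [HFH^{-1}]$ for some representative $H$, whose domain and range must match those of $G$ and $F$ respectively. Since type $0$, $1$ and $2$ reductions of closed diagrams correspond exactly to equivalences of the underlying open diagrams (\cref{rmk:equivalent:open:then:equivalent:closed}), it suffices to show that $\C(HFH^{-1})$ is equivalent to $\C(F)$. Here I would invoke that $\Groupoid(\Gamma)$ is generated by permutation and split diagrams (hence also by their inverses, the merge diagrams) to write $H = g_1 \cdots g_n$ with each $g_i$ of one of these three types. Writing $HFH^{-1} = g_1\bigl(\cdots(g_n F g_n^{-1})\cdots\bigr)g_1^{-1}$ and noting that every intermediate diagram $g_i \cdots g_n F g_n^{-1} \cdots g_i^{-1}$ is a conjugate of $F$, hence again has equal domain and range so that its closure is defined, I would induct on $n$. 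This reduces everything to the single claim that $\C(gKg^{-1})$ is equivalent to $\C(K)$ whenever $g$ is one generator and $K$ has equal domain and range.

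This single claim is the converse of the remarks and is the crux. The key observation is that the local geometric picture underlying \cref{rmk:shift:is:conjugacy} and \cref{rmk:permutation:is:conjugacy} is symmetric, so it supplies both directions of the correspondence. When $g$ is a permutation diagram, $g$ on top and $g^{-1}$ at the bottom close up through the base line to a permutation of the base points, so $\C(gKg^{-1})$ is literally a base line permutation of $\C(K)$. When $g$ is a single caret, the split (resp.\ merge) it contributes sits, in $\C(gKg^{-1})$, immediately below a source (resp.\ above a sink) regardless of the internal structure of $K$, which is exactly the configuration required for a base line shift; hence $\C(gKg^{-1})$ is a base line shift of $\C(K)$. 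A general split or merge diagram is a composition of single carets, handled by iterating. Thus conjugation by each $g_i$ is a similarity, and composing these similarities gives $\C(HFH^{-1}) \sim \C(F)$; combined with $\C(G) \sim \C(HFH^{-1})$ from \cref{rmk:equivalent:open:then:equivalent:closed}, this yields $\C(F) \sim \C(G)$.

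I expect the main obstacle to be making this crux fully rigorous, namely verifying that conjugation by a single caret is matched precisely by one base line shift, with the correct expanding/reducing direction and the rotation-system and colour bookkeeping of \cref{def:gamma:strand:diagram} preserved throughout. The delicate point is confirming that the base line shift hypothesis (a source immediately followed by a split, or a sink immediately preceded by a merge) is \emph{automatically} satisfied in $\C(gKg^{-1})$ because the caret $g$ is placed adjacent to the base line by construction. Once this local correspondence is pinned down, the inductive composition of similarities and the passage between the two directions are routine.
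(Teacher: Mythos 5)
Your proof is correct and takes essentially the same route as the paper: the forward direction assembles \cref{rmk:shift:is:conjugacy,rmk:permutation:is:conjugacy,rmk:reduction:is:conjugacy}, and the converse reduces to showing $\C(HFH^{-1})$ is equivalent to $\C(F)$ by moving the base line through $H$ so that $H$ cancels against $H^{-1}$ — the paper does this in one stroke, while you spell it out generator by generator, which is a legitimate elaboration. One small imprecision in your crux: when $g$ is a single caret, $\C(gKg^{-1})$ is not literally a base line shift of $\C(K)$; the shift produces $\C(Kg^{-1}g)$, and one still needs a type 1 or type 2 reduction to cancel the now-adjacent pair $g^{-1}g$ (the paper explicitly adds ``applying reductions of type 0, 1 and 2'' after the shift), so ``conjugation by each $g_i$ is a similarity'' is not accurate as stated, though the conclusion is unaffected since equivalence of closed diagrams allows reductions as well as similarities.
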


\begin{proof}
\cref{rmk:shift:is:conjugacy,rmk:permutation:is:conjugacy,rmk:reduction:is:conjugacy} imply that, if the closed diagrams are equivalent, then the group elements are conjugate.
For the converse, the proof follows the same strategy as that of \cite[Proposition 4.1]{RearrConj}.
Assume that $[F]=[H][G][H]^{-1}$.
Denoting by $H^{-1}$ the $\Gamma$-strand diagram obtained by flipping $H$ upside-down, we have $[F]=[H G H^{-1}]$, so $\C(F)$ and $\C(H G H^{-1})$ are equivalent by \cref{rmk:equivalent:open:then:equivalent:closed}.
Note that $\C(H G H^{-1})$ is equivalent to $\C(G)$, since the copies of $H$ and $H^{-1}$ cancel out after shifting the base line through $H$ or through $H^{-1}$ and applying reductions of type 0, 1 and 2.
Thus, $\C(F)$ is equivalent to $\C(H G H^{-1})$ which is equivalent to $\C(G)$, as needed.
\end{proof}

\subsubsection{Partial confluence for reductions of type 0, 1 and 2}

Let us say that a $\Gamma$-closed strand diagram is \textbf{semi-reduced} if no reduction of type 0, 1 and 2 can be performed on it nor on any diagram in its similarity class.
This just means that there is no reduction of type 0, 1 and 2 that is unlocked by permuting the base points and shifting the base line.

Every semi-reduced $\Gamma$-closed strand diagram decomposes into two disjoint parts:
the \textbf{split-merge part}, comprised of those connected components that feature at least a split or a merge, and the \textbf{loop part}, comprised of those connected components that only feature base points (there are no degenerate points since the diagram is semi-reduced).
Note that this is essentially the same decomposition described in \cite{GofferLederle} for almost automorphisms of trees.

We say that two $\Gamma$-closed strand diagrams are \textbf{3-equivalent} when one can be turned into the other using similarities, reductions of type 3 and their inverse.

\begin{proposition}
\label{prop:equivalent:semi:reduced}
If $\C(F)$ and $\C(G)$ are semi-reduced and equivalent, then they are 3-equivalent.
\end{proposition}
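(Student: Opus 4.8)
The plan is to regard the type 0, 1 and 2 reductions as a rewriting system whose normal forms are exactly the semi-reduced diagrams, to prove this system confluent, and then to show that type 3 reductions interact with it well enough that they can always be postponed. Since being semi-reduced is a property of an entire similarity class, I would work throughout with similarity classes of $\Gamma$-closed strand diagrams, so that a single elementary move is a reduction or expansion of one of the four types, performed up to base line shifts and permutations.

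First I would consider the rewriting relation generated by the type 0, 1 and 2 reductions modulo similarity and note that it is terminating, since each such reduction strictly decreases the number of points. The technical heart is local confluence: given two of these reductions applicable to the same class, I would show that their results admit a common reduct. When the two reductions target disjoint subdiagrams this is immediate; the remaining critical pairs are the overlapping cases, in which two reductions compete for a shared split, merge or strand, and for each I would exhibit a common reduct, inserting a base line shift or permutation whenever a reduction is unlocked only up to similarity. Termination together with local confluence gives confluence by Newman's lemma, so every class has a unique normal form, which I denote $N$, and these normal forms are precisely the semi-reduced classes.

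Next I would record two facts about how type 3 reductions sit over this system. First, a type 3 reduction rewrites a family of loops into a single loop and leaves the splits, merges and degenerate points untouched; since loops consist solely of base points, such a reduction neither creates nor destroys any type 0, 1 or 2 reduction, and therefore type 3 reductions and their inverses send semi-reduced classes to semi-reduced classes. Second, a type 3 reduction and a type 0, 1 or 2 reduction always commute, because they act on disjoint features of the diagram (loops versus splits, merges and degenerate points), so that the two can be carried out in either order, closing a diamond modulo similarity.

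With these in hand I would run the standard abstract-rewriting argument, applying the normal-form map $N$ term by term along a given equivalence between two semi-reduced classes $\C(F)$ and $\C(G)$. An elementary type 0, 1 or 2 move leaves $N$ unchanged by confluence. For an elementary type 3 move from $a$ to $b$ (in either direction), I would tile the commutation diamonds along a reduction sequence from $a$ to $N(a)$, obtaining a type 3 step from $N(a)$ to some class $c$ and a type 0, 1, 2 reduction sequence from $b$ to $c$; since type 3 moves preserve normal forms, $c$ is semi-reduced and hence equals $N(b)$, so $N(a)$ and $N(b)$ differ by a single type 3 move. Consecutive terms of the equivalence thus have $N$-images related by similarities and type 3 moves alone, and since $\C(F)$ and $\C(G)$ equal their own normal forms this exhibits them as 3-equivalent. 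The main obstacle is the critical-pair analysis for local confluence, together with the bookkeeping forced by the base line: because reductions may be available only after a similarity, the closing diamonds live modulo similarity, and one must check that the realignments demanded by competing reductions never obstruct one another.
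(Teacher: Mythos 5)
There is a genuine gap at the first and most important step: the claim that the rewriting system generated by type 0, 1 and 2 reductions \emph{modulo similarity alone} is locally confluent, with the semi-reduced similarity classes as unique normal forms. This is false, and it is precisely the failure that the paper's proof is built around. The critical pair in which a type 1 and a type 2 reduction overlap (a split and a merge joined through the base line, as in \cref{fig:confluence}) does not close under type 0, 1, 2 reductions and similarities: one order of operations produces several loops (e.g.\ a green and a red loop) while the other produces a single loop of a different color (a blue loop), and these two semi-reduced outcomes are related only by a type 3 reduction, not by a similarity. Consequently semi-reduced forms are not unique up to similarity, your normal-form map $N$ is not well defined, and the subsequent postponement argument (which repeatedly invokes ``$c$ is semi-reduced and hence equals $N(b)$'') collapses. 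The non-confluence of type 3 reductions discussed in \cref{sec:loops} is a further symptom of the same phenomenon.

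The paper's proof avoids this by choosing the objects of the rewriting system to be 3-equivalence classes rather than similarity classes: with that choice the problematic type 1 versus type 2 critical pair closes trivially, because its two outcomes represent the same 3-equivalence class, and Newman's lemma then yields a unique reduced 3-equivalence class, which is exactly the statement to be proved. Your overall architecture (termination, critical-pair analysis, postponing type 3 moves past type 0, 1, 2 reductions) is reasonable and your observation that type 3 moves preserve semi-reducedness is correct, but to repair the argument you must either quotient by 3-equivalence before invoking Newman's lemma, as the paper does, or explicitly weaken your diamonds so that they close only up to a type 3 move and track that weaker closure through the induction.
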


\begin{proof}
Note that the split-merge part of any two 3-equivalent diagrams are the same up to similarity.
Consider the rewriting system on the set of 3-equivalence classes determined by the reductions of type 0, 1 and 2.
Note that a reduced object for this rewriting system corresponds to all equivalent semi-reduced $\Gamma$-closed strand diagrams.
We want to show that each 3-equivalence class can be rewritten to a unique reduced 3-equivalence class.
By Newman's diamond Lemma \cite{NewmanDiamond}, it suffices to show that the rewriting system is \textit{terminating} (there is no infinite sequence of rewritings) and \textit{locally confluent} (given rewritings $X \rightarrow A$ and $X \rightarrow B$ of the same object $X$, there exist sequences of rewritings from $A$ and from $B$ to a common object $Y$).

Each reduction decreases the number of splits, merges or degenerate points, so the rewriting system is terminating.
For local confluence, assume that there are two distinct rewritings $X \rightarrow A$ and $X \rightarrow B$.
If the rewriting are \textit{disjoint} (i.e., they involve disjoint subgraphs), then clearly each rewriting can be applied after the other, producing a common object $Y$.
Assume thus that the rewritings are not disjoint.
Let us distinguish between two cases:

\textit{Case 1:}
at least one of the two rewritings is a type 0 reduction.
Then the rewritings can be performed in whichever order without changing the result.

\textit{Case 2:}
the two rewritings are a reduction of type 1 and one of type 2.
This case is portrayed in \cref{fig:confluence}.
In this case, the two rewritings produce different sets of loops which are equivalent up to a type 3 reduction, so they represent the same object of the rewriting system.

In both cases, local confluence is verified, so Newman's diamond Lemma applies and the rewriting system has unique reduced objects, as needed.
\end{proof}

\begin{figure}
\centering
\begin{tikzpicture}[font=\footnotesize]
    \begin{scope}[yshift=4.5cm]
    \begin{scope}[xshift=-3.25cm]
    \draw[gray,dashed] (0,0) to[out=225,in=0] (-.75,-.25) to (-2,-.25);
    \draw[gray,fill=gray!20] (0,0) circle (.333cm);
    \node[point,blue] (B1) at (-1.25,.375) {};
    \node[point,Green,fill=Green!20] (G) at (-1.5,-.25) {};
    \node[point,red,fill=red!20] (R) at (-1,-.25) {};
    \node[point,blue] (B2) at (-1.25,-.875) {};
    \draw[strand,Green] (B1) to[out=225,in=90] (G);
    \draw[strand,red] (B1) to[out=315,in=90] (R);
    \draw[strand,Green] (G) to[out=270,in=135] (B2);
    \draw[strand,red] (R) to[out=270,in=45] (B2);
    \draw[strand,blue] (B2) to[out=270,in=180] (-.25,-1.25) to[out=0,in=270] (.75,0) to[out=90,in=0] (-.25,1) to[out=180,in=90] (B1);
    \end{scope}
    \draw[to-to] (-1.25,0) to node[above]{Shift} (-.25,0);
    \begin{scope}[xshift=2.75cm,yshift=-.125cm]
    \draw[gray,dashed] (0,0) to[out=135,in=0] (-.75,.5) to (-1.75,.5);
    \draw[gray,fill=gray!20] (0,0) circle (.333cm);
    \node[point,blue,fill=blue!20] (B) at (-1.25,.5) {};
    \node[point,blue] (B1) at (-1.25,0) {};
    \node[point,blue] (B2) at (-1.25,-.75) {};
    \draw[strand,Green] (B1) to[out=210,in=150] (B2);
    \draw[strand,red] (B1) to[out=330,in=30] (B2);
    \draw[strand,blue] (B2) to[out=270,in=180] (-.25,-1.125) to[out=0,in=270] (.75,0) to[out=90,in=0] (-.25,1.125) to[out=180,in=90] (B);
    \draw[strand,blue] (B) to (B1);
    \end{scope}
    \end{scope}
    \draw[-Stealth] (-3.5,2.75) to node[above,rotate=90]{Type 2} (-3.5,1.75);
    \draw[-Stealth] (2.5,2.75) to node[above,rotate=270]{Type 1} (2.5,1.75);
    \begin{scope}[xshift=-3.5cm]
    \draw[gray,dashed] (0,0) to (-1.667,0);
    \draw[gray,fill=gray!20] (0,0) circle (.333cm);
    \node[point,Green,fill=Green!20] (G) at (-1.25,0) {};
    \node[point,red,fill=red!20] (R) at (-.75,0) {};
    \draw[strand,Green] (G) to[out=270,in=180] (0,-1.25) to[out=0,in=270] (1.25,0) to[out=90,in=0] (0,1.25) to[out=180,in=90] (G);
    \draw[strand,red] (R) to[out=270,in=180] (0,-.75) to[out=0,in=270] (.75,0) to[out=90,in=0] (0,.75) to[out=180,in=90] (R);
    \end{scope}
    \draw[-Stealth] (-1.25,0) to node[above]{Type 3} (-.25,0);
    \begin{scope}[xshift=2.5cm]
    \draw[gray,dashed] (0,0) to (-1.333,0);
    \draw[gray,fill=gray!20] (0,0) circle (.333cm);
    \node[point,blue,fill=blue!20] (B) at (-.8,0) {};
    \draw[strand,blue] (B) to[out=270,in=180] (0,-.8) to[out=0,in=270] (.8,0) to[out=90,in=0] (0,.8) to[out=180,in=90] (B);
    \end{scope}
\end{tikzpicture}
\caption{The reductions for the proof of \cref{prop:equivalent:semi:reduced}.}
\label{fig:confluence}
\end{figure}

\subsubsection{A partial algorithm for the conjugacy problem}

Let us describe the algorithm.
We will later discuss its three steps.
Recall that $F$ and $G$ are conjugate if and only if $\C(F)$ and $\C(G)$ are equivalent (\cref{prop:conjugacy:is:equivalence}), so it suffices to decide the equivalence of $\Gamma$-closed strand diagrams.

\begin{algorithm}
\label{alg:main}
Let $\C(F)$ and $\C(G)$ be $\Gamma$-closed strand diagrams.
\begin{description}
    \item[Step 1:] Compute semi-reduced $\Gamma$-closed strand diagrams $\C(F^*)$ and $\C(G^*)$ that are equivalent to $\C(F)$ and $\C(G)$, respectively.
    \item[Step 2:] Check whether the split-merge parts of $\C(F^*)$ and $\C(G^*)$ are the same up to similarity.
    \item[Step 3:] Check whether the loop parts of $\C(F^*)$ and $\C(G^*)$ are the same up to similarities and type 3 reductions.
\end{description}
If the answers to both steps 2 and 3 are positive, then $\C(F)$ and $\C(G)$ are equivalent.
Otherwise, they are not by \cref{prop:equivalent:semi:reduced}.
\end{algorithm}

Let us comment on how each step works.

\textbf{Step 1:}
This is done by applying reductions of type 0, 1 and 2 (up to similarities when needed to ``unlock'' reductions).
Such procedure works because reductions of type 0, 1 and 2 (up to similarities) produce unique semi-reduced 3-equivalence classes, as was shown in \cref{prop:equivalent:semi:reduced}.

\textbf{Step 2:}
This is equivalent to checking whether two cocycles are cohomologous in the underlying graphs, as noted in Remark 3.1 in \cite{RearrConj}.
See also \cite{Nalecz}, which features explicit computations for Thompson's group $V$.

To discuss the third and last step of the algorithm, we need to introduce and discuss new tools.
We will do this right below, in a new section.


\section{Type 3 reductions and the loops semigroup}
\label{sec:loops}

Let us translate the problem of deciding whether two sets of loops are the same up to similarities and type 3 reductions into the word problem in a semigroup.

\subsection{Reductions of type 3 are not confluent}

Differently from how we tackled the split-merge part in \cref{prop:equivalent:semi:reduced}, understanding whether two loop parts are equivalent cannot be solved by just applying reductions until one finds a reduced object.
Indeed, type 3 reductions are generally not confluent, i.e., starting from the same object, one may make different choices of reductions and arrive at different reduced objects.

For example, let $\Gamma$ be the first graph depicted in \cref{fig:graph:non:confluent}.
Assume that a $\Gamma$-closed strand diagram has a loop part consisting of a red loop and a blue loop, both with winding number $1$.
There are two possible type 3 reductions:
one produces a single red loop and the other a single blue loop.
Thus, which reductions are performed matters for the purpose of comparing reduced objects.

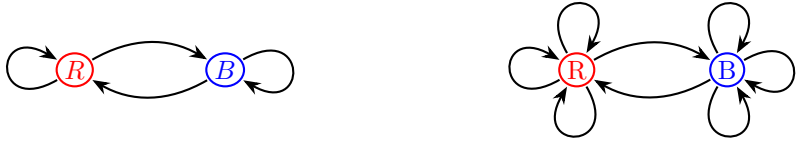
\begin{figure}
\centering
\begin{minipage}{.45\textwidth}
\centering
\begin{tikzpicture}
    \node[vertex,red] (R) at (-1,0) {$R$};
    \node[vertex,blue] (B) at (1,0) {$B$};
    \draw[edge] (R) to[out=30,in=150] (B);
    \draw[edge] (B) to[out=210,in=330] (R);
    \draw[edge] (R) to[out=210, in=150, min distance=1cm] (R);
    \draw[edge] (B) to[out=30, in=330, min distance=1cm] (B);
\end{tikzpicture}
\end{minipage}
\hfill
\begin{minipage}{.45\textwidth}
\centering
\begin{tikzpicture}
    \node[vertex,red] (R) at (-1,0) {R};
    \node[vertex,blue] (B) at (1,0) {B};
    \draw[edge] (R) to[out=30,in=150] (B);
    \draw[edge] (B) to[out=210,in=330] (R);
    \draw[edge] (R) to[out=300, in=240, min distance=1cm] (R);
    \draw[edge] (R) to[out=210, in=150, min distance=1cm] (R);
    \draw[edge] (R) to[out=120, in=60, min distance=1cm] (R);
    \draw[edge] (B) to[out=120, in=60, min distance=1cm] (B);
    \draw[edge] (B) to[out=30, in=330, min distance=1cm] (B);
    \draw[edge] (B) to[out=240, in=300, min distance=1cm] (B);
\end{tikzpicture}
\end{minipage}
\caption{Two graphs $\Gamma$ whose type 3 reductions are not confluent.}
\label{fig:graph:non:confluent}
\end{figure}

In the previous example, red and blue splits coincidentally have the same children, but there are non-confluent examples where this is not the case.
For instance, for the second graph $\Gamma$ depicted in \cref{fig:graph:non:confluent}, consider a loop part of $5$ red loops and $5$ blue loops.
We find three reduced objects:
(1) Two blue reductions followed by a red reduction result in a red loop.
(2) Two red reductions followed by a blue reduction result in a blue loop.
(3) A blue reduction followed by a red reduction (or the converse) result in two red loops and two blue loops.

Thus, we need other methods to decide when two loop parts are equivalent.

\subsection{The loops semigroup}

The strategy and techniques employed here are inspired by ideas that will appear in the upcoming work \cite{ConjNek}, which is currently work-in-progress.

Let us denote by $L(c,n)$ a loop colored by $c \in V\Gamma$ with winding number $n \geq 1$.
Given two multisets of loops, we compose them by uniting them.
Since they are multisets, we count with multiplicity:
a set of loops can have multiple copies of the same $L(c,n)$.
This operation is inherently associative and commutative.

For every $n \geq 1$ and every color $c \in V\Gamma$, let $\init^{-1}(c) = \{ e_1, \dots, e_k \}$ and $c_i = \term(e_i)$.
Reflecting type 3 reductions, we introduce the following relations:
\begin{equation}
\label{eq:relations}
\tag{$\mathsf{L}^{n}_{c}$}
L(c_1,n) + \dots + L(c_k,n) = L(c,n).
\end{equation}

The \textbf{loops semigroup} is the semigroup $\Loops$ generated by the infinite set
$$\left\{ L(c,n) \mid c \in V\Gamma, n \geq 1 \right\}$$
and subject to the relations described by \cref{eq:relations}.

The loop parts of $\Gamma$-closed strand diagrams are precisely sums of loops.
The fact that $\Loops$ is commutative and subject to the relations described by \cref{eq:relations} reflects how applying similarities and type 3 reductions to a sum of loops does not change the corresponding element of $\Loops$.
Thus, the elements of $\Loops$ correspond to the loop parts up to similarities and type 3 reductions.
Therefore, solving the word problem of $\Loops$ allows us to solve the third step of \cref{alg:main} and ultimately conclude this paper.

\begin{proposition}
\label{prop:loops:semigroup:union}
The loops semigroup $\Loops$ is the increasing union $\bigcup_{N\in\mathbb{N}} \Loops_N$ of finitely presented commutative semigroups
$$\Loops_N = \left\langle L(c,n),%
\begin{array}{ll}
     & \forall c \in V\Gamma,\\
     & \forall n=1,\dots,N
\end{array}
\ \bigg|\ \ \cref{eq:relations},
\begin{array}{ll}
     & \forall c \in V\Gamma\\
     & \forall n=1,\dots,N
\end{array}
\right\rangle.$$
\end{proposition}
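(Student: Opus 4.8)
The plan is to realize the $\Loops_N$ as an increasing chain of subsemigroups whose union is $\Loops$, which splits into three tasks: constructing the connecting homomorphisms, showing their images exhaust $\Loops$, and showing that they are injective (so that the identifications making the union \emph{increasing} are legitimate). First I would record that each $\Loops_N$ is finitely presented and commutative simply by inspection: it has the $|V\Gamma|\cdot N$ generators $L(c,n)$ with $c\in V\Gamma$ and $n\le N$, subject to the $|V\Gamma|\cdot N$ relations $(\mathsf{L}^n_c)$ indexed by the same pairs, where finiteness uses that $\Gamma$ is finite and commutativity holds by construction.

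Since every relation of $\Loops_N$ is again a relation of $\Loops_{N+1}$ and of $\Loops$, the inclusion of generating sets respects relations and hence induces semigroup homomorphisms $\iota_N\colon\Loops_N\to\Loops_{N+1}$ and $\jmath_N\colon\Loops_N\to\Loops$ satisfying $\jmath_{N+1}\circ\iota_N=\jmath_N$. For the union, I would observe that every element of $\Loops$ is represented by a finite word in the generators $L(c,n)$, which involves only finitely many winding numbers; choosing $N$ larger than all of them places that word in the image of $\jmath_N$, so $\Loops=\bigcup_N\jmath_N(\Loops_N)$.

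The crux, and the step I expect to be the main obstacle, is the injectivity of $\jmath_N$: I must rule out that the extra relations $(\mathsf{L}^m_c)$ with $m>N$ produce new identifications between words that use only winding numbers $\le N$. The key observation is that the defining relations are \emph{homogeneous in the winding number}: both sides of $(\mathsf{L}^n_c)$ consist solely of generators of winding number $n$. Consequently, for a word $w$ in the free commutative semigroup on the generators, the set $S(w)\subseteq\mathbb{N}$ of winding numbers actually occurring in $w$ is left unchanged by a single application of any relation in either direction, because applying $(\mathsf{L}^m_c)$ requires winding number $m$ to be present before the step, leaves it present afterward, and touches no generator of any other winding number. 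Being invariant under each elementary rewriting, $S$ is constant along any derivation. Therefore, if $u$ and $v$ use only winding numbers $\le N$ and $u=v$ holds in $\Loops$, then every intermediate word in a derivation witnessing this equality uses only winding numbers $\le N$, and every relation invoked is some $(\mathsf{L}^m_c)$ with $m\le N$; the whole derivation thus takes place inside $\Loops_N$, so $u=v$ already holds there. This yields injectivity of $\jmath_N$ (and hence of $\iota_N$), and identifying each $\Loops_N$ with its image under $\jmath_N$ exhibits $\Loops$ as the increasing union $\bigcup_{N\in\mathbb{N}}\Loops_N$.
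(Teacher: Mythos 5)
Your proposal is correct and follows essentially the same route as the paper: both hinge on the observation that the relations $(\mathsf{L}^n_c)$ are homogeneous in the winding number, so that no relation with $m>N$ can identify words involving only winding numbers $\le N$, and both conclude by noting that every element involves only finitely many winding numbers. The only difference is one of direction and rigor: the paper defines $\Loops_N$ as the subset of $\Loops$ of sums with winding numbers at most $N$ and asserts that its presentation is ``induced,'' whereas you build $\Loops_N$ from the presentation and justify the injectivity of $\Loops_N\to\Loops$ via the invariance of the set of occurring winding numbers along any derivation --- a welcome elaboration of the step the paper leaves implicit.
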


\begin{proof}
Let $\Loops_N$ be the subset of $\Loops$ comprised of any sum of loops whose winding numbers are at most $N$.
The relations described by \cref{eq:relations} do not change the winding numbers, so each representative for an element of $\Loops_N$ is comprised of loops of winding number at most $N$.
Summing sets of loops with winding numbers at most $N$ produces a set of loops with winding numbers at most $N$, so $\Loops_N$ is a subsemigroup of $\Loops$.
The relations defining $\Loops_N$ are induced by those of $\Loops$, so $\Loops_N$ is presented as in the statement.
By definition $\Loops_N \subseteq \Loops_{N+1}$.
Finally, every element of $\Loops$ has a maximum winding number, so $\Loops = \bigcup_{N\in\mathbb{N}} \Loops_N$.
\end{proof}

The author could not find any notion of decomposition of commutative semigroups in the literature that reflects this description of $\Loops$.

Recall that finitely presented commutative semigroups have solvable word problem, see for example Theorem 5.1 of \cite{CommutativeSemigroup}.
Thus, the word problem of $\Loops$ itself is solvable using the following algorithm.

\begin{algorithm}
Let $L_1$ and $L_2$ be two elements of $\Loops$.
\begin{enumerate}
    \item Compute $N$ the maximum winding number among the loops of $L_1$ and $L_2$.
    Then $L_1$ and $L_2$ belong to the subsemigroup $\Loops_N$.
    \item Decide whether $L_1$ and $L_2$ are the same element in $\Loops_N$ using the fact that it is a finitely presented commutative semigroup.
\end{enumerate}
\end{algorithm}

This solves step 3 of \cref{alg:main}, concluding this paper.


\section*{Acknowledgements}

The author is thankful to Davide Perego and Francesco Matucci for fruitful conversations, to Julio Aroca, Jim Belk and Francesco Matucci for the ideas that arose from \cite{ConjNek} and to Collin Bleak, Noah Pugh and Teiva Jabbour for help finding literature about commutative semigroups and their word problem.

The author is supported by the "GoFR" project ANR-22-CE40-0004 and by the research grant PID2022-138719NA-I00 (Proyectos de Generación de Conocimiento 2022) financed by the Spanish Ministry of Science and Innovation, and is a member of the Gruppo Nazionale per le Strutture Algebriche, Geometriche e le loro Applicazioni (GNSAGA) of the Istituto Nazionale di Alta Matematica (INdAM).


\printbibliography[heading=bibintoc]

\end{document}